\title[Full dimesnsional sets of reals]{Full dimesnsional sets of reals whose sums of partial quotients increase in certain speed}
\author{Liangang Ma}
\address{Dept.\ of Mathematical Sciences, Binzhou University, Huanghe 5th Road No. 391, City of Binzhou 256600, Shandong Province, P. R. China} 
\email{maliangang000@163.com}
\thanks{The author would like to give his deep thanks to Prof. Baowei Wang as well as the two anonymous referees, who kindly help to improve several important results throughout the paper and well-presentation of the work.}    
\subjclass[2010]{Primary 11K50; Secondary 37E05, 28A80}
\newtheorem{theorem}[subsection]{Theorem}
\newtheorem{lemma}[subsection]{Lemma}
\newtheorem{proposition}[subsection]{Proposition}
\newtheorem{corollary}[subsection]{Corollary}
\newtheorem*{Good's corollary}{Good's Corollary}
\newtheorem*{WW's Theorem}{WW's Theorem}
\newtheorem*{CV's theorem}{CV's Theorem}
\newtheorem*{FLWW's lemma}{FLWW's Lemma}
\newtheorem*{LR's Lemma}{LR's Lemma}
\newtheorem*{IJ's proposition}{IJ's Proposition}
\newtheorem*{Comparison Lemma}{Comparison Lemma}
\numberwithin{equation}{section}
\renewcommand{\P}{\mathcal{P}}
\begin{document} 

\begin{abstract}
 For a real $x\in(0,1)\setminus\mathbb{Q}$, let $x=[a_1(x),a_2(x),\cdots]$ be its continued fraction expansion. Let $s_n(x)=\sum_{j=1}^n a_j(x)$. The Hausdorff dimensions of the level sets
 \begin{center}
 $E_{\varphi(n),\alpha}:=\{x\in(0,1): \lim_{n\rightarrow\infty}\frac{s_n(x)}{\varphi(n)}=\alpha\}$
 \end{center}
for $\alpha\geq 0$ and a non-decreasing sequence $\{\varphi(n)\}_{n=1}^\infty$ have been studied by E. Cesaratto, B. Vall\'ee, J. Wu, J. Xu, G. Iommi, T. Jordan, L. Liao, M. Rams \emph{et al}. In this work we carry out a kind of inverse project of their work, that is, we consider the conditions on $\varphi(n)$ under which one can expect a $1$-dimensional set $E_{\varphi(n),\alpha}$. We give certain upper and lower bounds on the increasing speed of $\varphi(n)$ when  $E_{\varphi(n),\alpha}$ is of Hausdorff dimension 1 and a new class of sequences $\{\varphi(n)\}_{n=1}^\infty$ such that $E_{\varphi(n),\alpha}$ is of full dimension. For an irregular sequence $\{\varphi(n)\}_{n=1}^\infty$, a full dimensional set $E_{\varphi(n),\alpha}$ is impossible.
 \end{abstract}
 
 \maketitle

\section{Introduction}
Let $I=[0,1]$ be the unit interval. For a real $x\in I\setminus \mathbb{Q}$, let
\begin{equation}
x=[a_1(x),a_2(x),a_3(x),\cdots]:=\cfrac{1}{a_1(x)+\cfrac{1}{a_2(x)+\cfrac{1}{a_3(x)+_{\ddots}}}}
\end{equation}
be its continued fraction expansion. Let
\begin{center}
$G(x)=\{  \frac{1}{x} \}=\frac{1}{x}-[\frac{1}{x}]$
\end{center}
be the Gauss map, with the two symbols $\{\ \}$ and $[\ ]$ being the fractional and integral part of the number. $G(x)$ is conjugated to a shift map on a countable alphabet. Let 
\begin{center}
$s_n(x)=\sum_{j=1}^n a_j(x)$
\end{center} 
be the sum of the first $n$ partial quotients, $n\in\mathbb{N}$. We focus on the limit behaviors of $s_n(x)$ in this work. According to A. Ya. Khinchin [Khi],
\begin{center}
$\lim_{n\rightarrow\infty}\frac{s_n(x)}{n}=\infty$
\end{center}
almost everywhere with respect to Lebesgue measure. In 1988, W. Philipp \cite[Theorem 1]{Phi} strengthened Khinchin's result by showing that, for a sequence of positive numbers $\varphi(n)$ such that $\frac{\varphi(n)}{n}$ is non-decreasing,
\begin{center}
$\lim_{n\rightarrow\infty}\frac{s_n}{\varphi(n)}=0$ or $\limsup_{n\rightarrow\infty}\frac{s_n}{\varphi(n)}=\infty$ a. e.
\end{center} 
according to whether $\sum_{n=1}^\infty\frac{1}{\varphi(n)}<\infty$ or $=\infty$. His proof relies on  the theory of mixing random vertors or triangular arrays. As to subsets of the residual set, which are all of measure 0, it turns out that the Hausdorff dimension is a useful tool to distinguish their sizes. The initial explorer of Hausdorff dimensions of related sets is A. Besicovitch \cite{Bes}. Since then there are continuously other dimensional results, from the distribution of various terms to regularity of these distributions with respect to various levels. It turns out that the dimensional results in continued fractions provide plentiful and intricate examples and tests for various theories of fractal geometry \cite{Man}.  

From the point view of dynamical systems $(I, G(x))$, E. Cesaratto and B. Vall\'ee \cite{CV},  G. Iommi and T. Jordan \cite{IJ1, IJ2} got interesting results on Hausdorff dimensions of the sets 
\begin{center}
$\{x: \limsup_{n\rightarrow\infty}\frac{s_n(x)}{n}\leq \alpha\}$ or $\{x: \lim_{n\rightarrow\infty}\frac{s_n(x)}{n}=\alpha\}$
\end{center}
for $\alpha\in [0,\infty)$, as applications of their more comprehensive results in their more general contexts (the case $\alpha=\infty$ is also computed in \cite{IJ1}).

The \emph{level-$\alpha$ sets} 
\begin{equation}
E_{\varphi(n),\alpha}:=\{x\in(0,1): \lim_{n\rightarrow\infty}\frac{s_n(x)}{\varphi(n)}=\alpha\}
\end{equation} 
for $\alpha\in [0,\infty)$ and non-decreasing $\varphi(n), n\in\mathbb{N}$ are considered by J. Xu \cite{Xu}, J. Wu and J. Xu \cite{WX1}, as well as L. Liao and M. Rams \cite{LR1}. In the following we will often use the abbreviation 
\begin{center}
$E_{\varphi(n)}:=E_{\varphi(n),1}$ 
\end{center}
for the level-$1$ set of a sequence  $\{\varphi(n)\}_{n=1}^\infty$.  For a set $E\subset [0,1]$, let $dim_H E$ be its Hausdorff dimension. They proved various results on $dim_H E_{\varphi(n),\alpha}$ with different increasing speed $\varphi(n)$. For example, in \cite{WX1},  Wu and Xu showed that
\begin{center}
$dim_H E_{n\log n,\alpha}=1$
\end{center}  
for any $\alpha\geq 0$. They also gave more sequences $\varphi(n)$ such that $dim_H E_{\varphi(n),\alpha}=1$ in \cite[4]{WX1} with the restriction that
\begin{equation}
\limsup_{n\rightarrow\infty}\frac{\log\log\varphi(n)}{\log n}<\frac{1}{2}.
\end{equation} 
In the case $\limsup_{n\rightarrow\infty}\frac{\log\log\varphi(n)}{\log n}=\frac{1}{2}$, Liao and Rams \cite[Theorem 1.2]{LR1} proved that a full dimensional set is still possible for some sequences $\varphi(n), n\in\mathbb{N}$. In this paper we continue the search for non-decreasing sequences $\varphi(n)$ such that $dim_H E_{\varphi(n),\alpha}=1$, as an attempt to exhausting sequences with this property. We first show that
\begin{theorem}\label{Theorem1}
For a non-decreasing sequence $\{\varphi(n)\}_{n=1}^\infty$ and a positive real $\alpha$, if $dim_H E_{\varphi(n),\alpha}=1$, then 
\begin{center}
$\lim_{n\rightarrow\infty}\frac{\varphi(n)}{n}=\infty$ and $\liminf_{n\rightarrow\infty}\frac{\log\varphi(n)}{n}=0$.
\end{center}
\end{theorem}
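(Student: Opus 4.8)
I would prove the two assertions separately, each by contraposition: in each case I exhibit a set that contains $E_{\varphi(n),\alpha}$ and whose Hausdorff dimension is already known, or can be shown directly, to be strictly less than $1$.

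\textbf{Necessity of $\limsup_{n\to\infty}\varphi(n)/n=\infty$.} Suppose instead that $\limsup_{n\to\infty}\varphi(n)/n=L<\infty$, and fix $C>L$, so that $\varphi(n)\le Cn$ for all large $n$. For $x\in E_{\varphi(n),\alpha}$ the quotient $s_n(x)/\varphi(n)$ converges to $\alpha$ and is in particular bounded, hence
\[
\limsup_{n\to\infty}\frac{s_n(x)}{n}=\limsup_{n\to\infty}\Bigl(\frac{s_n(x)}{\varphi(n)}\cdot\frac{\varphi(n)}{n}\Bigr)\le(\alpha+1)\,C<\infty .
\]
Thus $E_{\varphi(n),\alpha}\subseteq\{x:\limsup_{n\to\infty}s_n(x)/n\le(\alpha+1)C\}$, a set that is moreover empty when $\alpha=0$ (since $s_n(x)/n\ge1$). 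By the results of Cesaratto--Vall\'ee \cite{CV} and Iommi--Jordan \cite{IJ}, which compute the Hausdorff dimension of precisely such sub-level sets of $\tfrac1ns_n$ and show it to be $<1$ whenever the threshold is finite (it approaches $1$ only as the threshold goes to $\infty$), this set has Hausdorff dimension strictly less than $1$. Hence $dim_H E_{\varphi(n),\alpha}<1$, the contrapositive of the first claim.

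\textbf{Necessity of $\liminf_{n\to\infty}\log\varphi(n)/n=0$, in the case $\alpha>0$.} Suppose $\liminf_{n\to\infty}\log\varphi(n)/n=c>0$; fix $0<c'<c$, so that $\varphi(n)\ge e^{c'n}$ for all large $n$, and put $\psi(n):=\tfrac{\alpha}{2n}e^{c'n}$. For $x\in E_{\varphi(n),\alpha}$ one has $s_n(x)\ge\tfrac{\alpha}{2}\varphi(n)\ge\tfrac{\alpha}{2}e^{c'n}$ eventually; since $s_n(x)\le n\max_{1\le i\le n}a_i(x)$, for each large $n$ there is an index $i(n)\le n$ with $a_{i(n)}(x)\ge\psi(n)$. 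As every $a_j(x)$ is finite while $\psi(n)\to\infty$, the indices $i(n)$ cannot remain bounded, and since $\psi$ is eventually increasing this forces $a_m(x)\ge\psi(m)$ for infinitely many $m$. Hence
\[
E_{\varphi(n),\alpha}\ \subseteq\ \bigl\{x:\ a_m(x)\ge\tfrac{\alpha}{2m}e^{c'm}\ \text{for infinitely many }m\bigr\}.
\]
Since $\liminf_{m\to\infty}\tfrac1m\log\psi(m)=c'>0$, WW's Theorem on continued fractions with infinitely many large partial quotients shows that this $\limsup$ set has dimension $<1$. Concretely, one covers $\{x:a_m(x)\ge\psi(m)\}$ by rank-$m$ cylinders, of length $\asymp q_m^{-2}\le(a_mq_{m-1})^{-2}$, and uses the standard estimate $\sum_{(a_1,\dots,a_{m-1})}q_{m-1}^{-2s}\le e^{(m-1)P(s)+o(m)}$, with $P$ the pressure of the Gauss system normalized so that $P(1)=0$, so that $P(s)\to0$ and $P(s)/(2s-1)\to0$ as $s\to1^-$; this gives $\sum_m\mathcal H^s_\infty(\{x:a_m(x)\ge\psi(m)\})<\infty$ for every $s<1$ with $P(s)/(2s-1)<c'$, whence $dim_H E_{\varphi(n),\alpha}\le s<1$.

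\textbf{The main obstacle, and the case $\alpha=0$.} In the first part all the content sits in the cited dimension formulas, the inclusion being immediate. In the second part the passage to the $\limsup$ set and the deduction of infinitely many exponentially large partial quotients from exponential growth of $s_n(x)$ are elementary; the one genuinely technical point — and the step I expect to be the main obstacle — is the uniform control of the rank-$m$ cylinder sums, which is exactly the content of WW's Theorem, so I would cite it rather than re-derive it. The second argument uses $\alpha>0$ essentially, since it is what supplies the lower bound $s_n(x)\gtrsim\varphi(n)$; the set $E_{\varphi(n),0}$ carries no such lower bound on $s_n(x)$, so the case $\alpha=0$ would have to be handled by a separate argument.
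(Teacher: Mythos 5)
Your proposal is correct for $\alpha>0$, and in both halves it works by the same overall strategy as the paper: trap $E_{\varphi(n),\alpha}$ inside a set whose Hausdorff dimension is already known to be strictly less than $1$. The first half is essentially the paper's Lemma \ref{lemma1}, which uses $F_\alpha=\{x: s_n(x)/n\le \alpha\ \mbox{for all } n\}$ and CV's Theorem; your use of the $\limsup$-version of the bounded-average set is if anything cleaner, since $E_{\varphi(n),\alpha}\subset F_\alpha$ literally holds only after discarding finitely many indices. For the second half, however, the paper's official proof (Theorem \ref{Theorem3}) takes a different route: it splits $E_{\varphi(n)}$ into $E_1=\{a_n(x)>B^n\ i.o.\}$ and $E_2$ (only finitely many such $n$), bounds $\dim_H E_1\le s_B<1$ by WW's Theorem, and shows $\dim_H E_2\le 1/2$ by observing that on $E_2$ the quotients are eventually squeezed into $[\tfrac12 B^i, B^i]$ and invoking FLWW's Lemma. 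Your pigeonhole argument ($s_n\le n\max_{i\le n}a_i$, hence $a_m\ge\psi(m)$ infinitely often, hence containment in a single WW-type $\limsup$ set) is precisely the shorter alternative proof of B. Wang that the paper records in the Remark following Theorem \ref{Theorem3}; it needs only WW's Theorem, whereas the author keeps the longer decomposition because its technique (transferring dimension information about individual quotients to sums, plus the FLWW squeeze) is reused later, e.g.\ in Section \ref{section1} and Corollary \ref{corollary1}. Two small points: the parenthetical in your first half should say that $E_{\varphi(n),0}$ itself is empty when $\varphi(n)\le Cn$ (the superset $\{x:\limsup_n s_n(x)/n\le C\}$ is certainly not empty), and your caveat about $\alpha=0$ is well taken rather than a defect of your argument --- the paper likewise only argues for $\alpha\in(0,\infty)$, and for $\alpha=0$ the second assertion can genuinely fail (e.g.\ $\varphi(n)=e^{n}$: the set $E_{\varphi(n),0}$ contains all points with bounded partial quotients and so has dimension $1$), so the restriction to positive $\alpha$ is essential there.
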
 
Then we continue to point out that 
\begin{theorem}\label{Theorem2}
For every $\alpha, \beta\in [0,1)$, there exists a non-decreasing sequence $\varphi(n), n\in\mathbb{N}$, such that
\begin{center}
$\limsup_{n\rightarrow\infty}\frac{\log\log\varphi(n)}{\log n}=\beta$  and $dim_H E_{\varphi(n),\alpha}=1$.
\end{center}
\end{theorem}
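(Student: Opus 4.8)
The case $\beta=0$ is already covered by the Wu--Xu example $\varphi(n)=n\log n$: there $\dim_H E_{\varphi(n),\alpha}=1$ for every $\alpha\ge0$, and $\log\log(n\log n)=\log\log n+o(1)$, so $\limsup_n\frac{\log\log\varphi(n)}{\log n}=0$. So I will produce the required sequence only for $\beta\in(0,1)$. The idea is to use a $\varphi$ that is ``flat'' on long blocks and jumps rarely. Fix a rapidly increasing sequence of integers $n_1<n_2<\cdots$ subject to the single quantitative constraint
\begin{equation}\label{eq:nk}
n_{k+1}=o\!\left(e^{n_k^{\beta}}\right)\qquad(k\to\infty)
\end{equation}
(for instance $n_{k+1}=\lfloor e^{n_k^{\beta}/2}\rfloor$ once $n_1$ is large), put $V_k:=\lceil e^{n_k^{\beta}}\rceil$, and set
\begin{equation}
\varphi(n):=V_k\ \text{ for }n_k<n\le n_{k+1},\qquad \varphi(n):=V_1\ \text{ for }1\le n\le n_1 .
\end{equation}
Then $\varphi$ is non-decreasing and jumps only at the points $n_k+1$, where it passes from $V_{k-1}$ to $V_k$.

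The value of the $\limsup$ is checked directly. For $n$ in the block $(n_k,n_{k+1}]$ one has $\log\log\varphi(n)=\log(n_k^{\beta})+O(1)=\beta\log n_k+O(1)$, so $\frac{\log\log\varphi(n)}{\log n}$ is largest at the left endpoint $n=n_k+1$, where it equals $\beta\frac{\log n_k}{\log(n_k+1)}+o(1)\to\beta$, and on the whole block it is at most $\frac{\beta\log n_k+O(1)}{\log(n_k+1)}=\beta+o(1)$. Hence $\limsup_{n\to\infty}\frac{\log\log\varphi(n)}{\log n}=\beta$; only $n_k\to\infty$ is used here.

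For the dimension I distinguish $\alpha=0$ and $\alpha>0$. If $\alpha=0$, any $x$ with $a_j(x)\le M$ for all $j$ satisfies $s_n(x)\le Mn=o(V_k)=o(\varphi(n))$ on each block, so $\{x:a_j(x)\le M\ \forall j\}\subseteq E_{\varphi(n),0}$; this set has Hausdorff dimension $s_M$ with $s_M\uparrow1$ as $M\to\infty$, whence $\dim_H E_{\varphi(n),0}=1$. If $\alpha>0$, then for each $M$ I would take the Cantor set $E_M$ of all $x$ with $a_j(x)\in\{1,\dots,M\}$ for $j\notin\{n_k+1:k\ge1\}$ and $a_{n_k+1}(x)$ ranging over the block $J_k$ of integers in $\big[\alpha V_k(1-\tfrac1k),\,\alpha V_k(1+\tfrac1k)\big]$. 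Splitting $s_n(x)=A_k(x)+F_n(x)$ on $(n_k,n_{k+1}]$, where $A_k(x)=\sum_{i\le k}a_{n_i+1}(x)\in\alpha\big[\sum_{i\le k}V_i(1-\tfrac1i),\ \sum_{i\le k}V_i(1+\tfrac1i)\big]$ and $0\le F_n(x)\le Mn\le Mn_{k+1}$, the estimates $\sum_{i\le k}V_i/V_k\to1$, $\sum_{i\le k}(V_i/i)/V_k\to0$ and (by \eqref{eq:nk}) $Mn_{k+1}/V_k\to0$ give $s_n(x)/\varphi(n)\to\alpha$ for \emph{every} $x\in E_M$, so $E_M\subseteq E_{\varphi(n),\alpha}$. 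It then remains to show $\dim_H E_M\ge s_M-o_M(1)$ and let $M\to\infty$. This I would obtain by a standard mass-distribution argument: transport to $E_M$ the natural (Gibbs/conformal) measure of dimension $s_M$ carried by the sequences with digits $\le M$, combined with the uniform measure on each $J_k$ at the sparse positions $n_k+1$, and estimate its local dimension on cylinders via $\prod_{j\le n}a_j\le q_n\le 2^n\prod_{j\le n}a_j$. The only new point is that the large partial quotients are harmless: since $\log a_{n_k+1}(x)\asymp n_k^{\beta}$, one has $\sum_{k:\,n_k+1\le n}\log a_{n_k+1}(x)=O(n^{\beta})=o(n)$ and likewise $\sum_k\log|J_k|=o(n)$, so these sparse digits change $\log q_n$ (hence $\log|C_n|$) and $\log\mu(C_n)$ only by $o(n)$, and the local dimension converges to the same limit as for digits bounded by $M$, namely $s_M$.

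The genuinely delicate step is this last one: certifying that pinning a density-zero set of digits to the windows $J_k$ does not push $\dim_H E_M$ below $s_M-o_M(1)$. This is exactly where $\beta<1$ is essential, through $\log a_{n_k+1}(x)=O(n_k^{\beta})=o(n_k)$, so that a single huge partial quotient near position $n_k$ stays negligible against the $\Theta(n_k)$ budget $\log q_{n_k}\asymp n_k$ generated by the free digits; for $\beta\ge1$ this fails, in accordance with Theorem~\ref{Theorem1}. Everything else has slack to spare: $\{n_k\}$ may be taken to grow as fast as a tower of exponentials (which automatically makes $V_{k-1}/V_k\to0$, hence $\sum_{i\le k}V_i/V_k\to1$, and makes $\{n_k\}$ of density zero) while $V_k=\lceil e^{n_k^{\beta}}\rceil$ keeps the $\limsup$ exactly $\beta$, and \eqref{eq:nk} is much weaker than such growth.
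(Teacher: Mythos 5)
Your plan is correct and follows essentially the same route as the paper: a non-decreasing step function with $\varphi(n_k)\approx e^{n_k^\beta}$, a sub-Cantor set of $E_{\varphi(n),\alpha}$ obtained by placing a forced large digit at each sparse position $n_k$ and bounding the remaining digits by $M$, and a mass-distribution lower bound tending to $1$ as $M\to\infty$ because the exceptional digits contribute only $O(n^\beta)=o(n)$ to $\log q_n$ when $\beta<1$. The only cosmetic differences are your choice of super-exponentially growing $\{n_k\}$ in place of the paper's $n_k=k^N$ with $N$ large (either choice makes $t(n)\cdot n^\beta=o(n)$, which is exactly where $\beta<1$ enters), and the paper's packaging of your $o(n)$ heuristic into an explicit cylinder-length comparison of the form $|I_n(a_1,\dots,a_n)|\ge |I_{n-t(n)}|^{1+\epsilon}$ after deleting the forced digits, which it then feeds into the Wu--Xu machinery.
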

\begin{remark}
Examples of $\varphi(n)$ with $\limsup_{n\rightarrow\infty}\frac{\log\log\varphi(n)}{\log n}\in [0,1/2]$ have been given in \cite{WX1} and \cite{LR1} according to their results mentioned before. Our examples (together with \cite[Theorem 1.2]{LR1}) with $\limsup_{n\rightarrow\infty}\frac{\log\log\varphi(n)}{\log n}\in (1/2,1)$ show how big effect a mild change on the growth rate of $\varphi(n)$ can have on the dimension of the level sets $dim_H E_{\varphi(n),\alpha}$. One is recommended to compare these results with \cite[4]{WX1} and \cite[Theorem 1.1, 1.2]{LR1}.
\end{remark}

We call a non-decreasing sequence $\{\varphi(n)\}_{n=1}^\infty$ \emph{irregular} if it satisfies $\liminf_{n\rightarrow\infty}\frac{\varphi(n)}{n}<\infty$. We are particularly interested in the irregular sequences with $\limsup_{n\rightarrow\infty}\frac{\varphi(n)}{n}=\infty$, which grows slightly slower than sequences with $\lim_{n\rightarrow\infty}\frac{\varphi(n)}{n}=\infty$. In Section \ref{section3}, we will show that 
\begin{theorem}\label{Theorem4}
For any small $\epsilon>0$ and a positive real $\alpha$, there exists a sequence $\{\varphi(n)\}_{n=1}^\infty$ satisfying 
\begin{center}
$\liminf_{n\rightarrow\infty}\frac{\varphi(n)}{n}<\infty$ and $\limsup_{n\rightarrow\infty}\frac{\varphi(n)}{n}=\infty$, 
\end{center}
such that
\begin{center}
$1-dim_H E_{\varphi(n),\alpha}< \epsilon$.
\end{center}
\end{theorem}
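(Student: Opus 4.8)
\emph{Sketch of the proof.} We take $\alpha>0$ (if $\alpha=0$, then $\liminf_n\varphi(n)/n<\infty$ forces, along any $n_k$ with $\varphi(n_k)/n_k$ bounded, $s_{n_k}(x)/\varphi(n_k)\geq n_k/\varphi(n_k)\not\to 0$, so $E_{\varphi(n),0}=\emptyset$ in that regime). Given $\epsilon>0$, the plan is to construct a Borel probability measure $\nu$ with $\nu\bigl(E_{\varphi(n),\alpha}\bigr)=1$ and $dim_H\nu>1-\epsilon$, whence $dim_H E_{\varphi(n),\alpha}>1-\epsilon$. Fix $M$ so large that the bounded-type set $B_M:=\{x\in(0,1):a_j(x)\leq M\text{ for all }j\}$ satisfies $dim_H B_M>1-\epsilon$, which is possible since $dim_H B_M\uparrow 1$ as $M\to\infty$, and let $\mu$ be the measure of maximal dimension on $B_M$ (an ergodic Gibbs state for $-(dim_H B_M)\log|G'|$), so that $dim_H\mu=h(\mu)/\lambda(\mu)=dim_H B_M=:s$, with $h(\mu)$ and $\lambda(\mu)=\int\log|G'|\,d\mu$ its entropy and Lyapunov exponent; by Birkhoff's theorem $s_n(x)/n\to L:=\int a_1\,d\mu\leq M$ for $\mu$-a.e.\ $x$.

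Next I would choose the super-sparse positions $b_k:=2^{2^k}$ and the digit values $c_k:=k\,b_k$, and put $B(n):=\sum_{k:\,b_k\leq n}c_k$, $N(n):=\#\{k:b_k\leq n\}$. One records: (i) $N(n)=O(\log\log n)=o(n)$; (ii) $\sum_{k:\,b_k\leq n}\log c_k=O(\log n)=o(n)$; (iii) $B(b_k)/b_k\geq c_k/b_k=k\to\infty$; (iv) since $b_{k+1}=b_k^2$ one has $\sum_{j\leq k}c_j\leq 2c_k$, hence $B(b_{k+1}-1)/(b_{k+1}-1)\leq 4c_k/b_{k+1}=4k/b_k\to0$. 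Define
\[
\varphi(n):=\alpha^{-1}\bigl(Ln+B(n)\bigr).
\]
Then $\varphi$ is strictly increasing and $\varphi(n)\to\infty$, and $\varphi(n)/n=\alpha^{-1}(L+B(n)/n)$ has $\limsup_n\varphi(n)/n=\infty$ by (iii) and $\liminf_n\varphi(n)/n=\alpha^{-1}L<\infty$ by (iv), so $\varphi$ is of the required irregular type.

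Let $\nu$ be the distribution of the continued fraction $[Y_1,Y_2,\dots]$ in which $Y_{b_k}:=c_k$ for every $k$ while the other coordinates $(Y_j)_{j\notin\{b_k\}}$, taken in increasing order of $j$, have the joint law of the digit sequence of a $\mu$-random point. For $\nu$-a.e.\ $x$ we get $a_{b_k}(x)=c_k$ and, by (i) together with Birkhoff's theorem, $\tfrac1n\sum_{j\leq n,\,j\notin\{b_k\}}a_j(x)\to L$; hence
\[
s_n(x)=\sum_{\substack{j\leq n\\ j\notin\{b_k\}}}a_j(x)+B(n)=\bigl(L+o(1)\bigr)n+B(n)=\bigl(Ln+B(n)\bigr)\bigl(1+o(1)\bigr),
\]
using $Ln+B(n)\geq Ln\to\infty$, and so $s_n(x)/\varphi(n)=\alpha\,s_n(x)/(Ln+B(n))\to\alpha$. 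Thus $\nu$ is carried by $E_{\varphi(n),\alpha}$.

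The remaining and, I expect, hardest step is $dim_H\nu\geq s$: the inserted digits $c_k$ must not lower the dimension. I would compute the local dimension $\lim_n\log\nu(I_n(x))/\log|I_n(x)|$ at $\nu$-a.e.\ $x$ ($I_n(x)$ the rank-$n$ cylinder), using $-\log|I_n(x)|=2\log q_n(x)+O(1)=\sum_{j=0}^{n-1}\log|G'(G^jx)|+O(1)$, with $q_n(x)$ the $n$-th convergent denominator. For the numerator, $I_n(x)$ is determined by the $n-N(n)$ free digits of $x$ among its first $n$ (the constraints at the $b_k\leq n$ being $\nu$-almost sure), so $\nu(I_n(x))=\mu(J_{n-N(n)})$ for the matching rank-$(n-N(n))$ $\mu$-cylinder, whence $\tfrac1n\log\nu(I_n(x))\to-h(\mu)$ by (i) and the Shannon--McMillan--Breiman theorem. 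For the denominator, $\log|G'(G^jx)|=2\log(1/G^jx)=2\log\bigl(a_{j+1}(x)+[0;a_{j+2}(x),\dots]\bigr)$: the $N(n)=o(n)$ terms with $j+1\in\{b_k\}$ add up to at most $2\sum_{k:\,b_k\leq n}\log(c_k+1)=o(n)$ by (ii), while every other term differs from the corresponding term of the Birkhoff sum $\sum_{m\leq n-N(n)}\log|G'(G^{m-1}y)|$ for the free point $y$ only through the tail $[0;a_{j+2}(x),\dots]$, which agrees with the $\mu$-world tail to exponential precision except for the $o(n)$ indices $j$ within $\sqrt{b_k}$ of some $b_k$ (here $b_{k+1}=b_k^2$ gives $\sum_{k:\,b_k\leq n}\sqrt{b_k}=O(\sqrt n)$, and these few-per-block terms are $O(\log M)$ each). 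Hence $\tfrac1n(-\log|I_n(x)|)\to\lambda(\mu)$, the local dimension equals $h(\mu)/\lambda(\mu)=s>1-\epsilon$, and the mass distribution principle gives $dim_H\nu\geq s$, so $dim_H E_{\varphi(n),\alpha}\geq s>1-\epsilon$. The main obstacle is making this perturbation bookkeeping fully rigorous --- showing that the insertions alter the entropy and Lyapunov estimates entering the local dimension by only $o(n)$ at scale $n$; the super-exponential sparseness of $\{b_k\}$ and the choice $c_k=k b_k$ (which keeps $\sum_{b_k\leq n}\log c_k=o(n)$ while still forcing the oscillation of $\varphi(n)/n$) are exactly what make this work. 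A complementary covering argument moreover yields $dim_H E_{\varphi(n),\alpha}<1$, so these are genuinely irregular examples of dimension strictly between $1-\epsilon$ and $1$.
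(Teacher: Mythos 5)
Your construction takes a genuinely different route from the paper, and the strategy is sound. The paper builds its irregular examples on top of the level sets $E_{n\alpha_r}$ themselves: by IJ's Proposition these have dimension $r$ arbitrarily close to $1$, and the paper then inserts huge digits $l^{l+1}-(l-1)^l$ at the sparse positions $l^l$, proving via its Comparison Lemma (applied to fundamental coverings, which compares $|I_n(\vec{\sigma}_n)|$ and $|I_n(\vec{\tau}_n)|$ when the words differ only at $o(n)$ places with controlled size) that the modified set $E'_{n\alpha_r}$ still has dimension $r$, and finally observes $E'_{n\alpha_r}\subset E_{\varphi_r(n)}$ for $\varphi_r(n)=n\alpha_r+(l-1)^l$. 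You instead take as background the bounded-type set $B_M$ with $\dim_H B_M>1-\epsilon$ and its Gibbs measure $\mu$, insert digits $c_k=k b_k$ at the doubly exponential positions $b_k$, tailor $\varphi(n)=\alpha^{-1}(Ln+B(n))$ to the almost-sure digit average $L$ of $\mu$, and estimate the local dimension of the hybrid measure $\nu$ via Shannon--McMillan--Breiman and a Lyapunov-exponent computation. What your approach buys is independence from Iommi--Jordan (only Jarn\'ik-type facts and standard thermodynamic formalism for the finite-alphabet subsystem are needed) and a measure of dimension $>1-\epsilon$ sitting directly on the level set; what the paper's approach buys is that the base set is already a level set (so the inclusion into $E_{\varphi_r(n)}$ is immediate) and the dimension-preservation step is handled by an elementary covering/length comparison rather than by ergodic-theoretic machinery. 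Your checks of the irregularity of $\varphi$ (items (i)--(iv)) and of $\nu(E_{\varphi(n),\alpha})=1$ are correct, and your aside about $\alpha=0$ matches the paper's restriction to positive $\alpha$.

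The one place where your argument is not yet a proof is the step you yourself flag: $\dim_H\nu\geq s$. This is exactly the step for which the paper created its Comparison Lemma, and in your framework it requires two pieces of bookkeeping to be written out. First, the perturbation estimates for the Lyapunov sum: your exponential-decay-of-tails argument plus the bounds $\sum_{b_k\leq n}\log c_k=O(\log n)$ and $N(n)=O(\log\log n)$ do give $\tfrac1n\bigl(-\log|I_n(x)|\bigr)\to\lambda(\mu)$, but the alignment between the indices of $x$ and of the free point $y$, and the uniform control of the $O(\sqrt n)$ boundary terms, must be made explicit. Second, the mass distribution principle needs ball estimates, not only cylinder estimates: at scales $r$ between $|I_{b_k}(x)|\asymp|I_{b_k-1}(x)|/c_k^2$ and $|I_{b_k-1}(x)|$ a ball can be much larger than the rank-$b_k$ cylinder, and one must use that $\nu$ charges, inside $I_{b_k-1}(x)$ and its neighbours, only the subcylinders with $a_{b_k}=c_k$, so the loss in the exponent is of order $\log c_k=o(b_k)$; this is routine (it is the same device used in Wang--Wu and Liao--Rams type arguments) but it is part of the claim, not an afterthought. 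Your closing assertion that a covering argument gives $\dim_H E_{\varphi(n),\alpha}<1$ is not needed for the theorem and is not proved in the paper either; I would either justify it separately or drop it.
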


\begin{theorem}\label{Theorem5}
For an irregular sequence $\{\varphi(n)\}_{n=1}^\infty$ and a non-negative real $\alpha$, we have 
\begin{center}
$dim_H E_{\varphi(n),\alpha}<1$.
\end{center}

\end{theorem}

Theorem \ref{Theorem5} is proved by the first referee of this paper. As proofs of these results for any $\alpha\in(0,\infty)$ are of completely same processes (case $\alpha=0$ is usually trivial by some known results), we only deal with dimension of the level-$1$ set $E_{\varphi(n)}$ instead of the level-$\alpha$ set $E_{\varphi(n),\alpha}$ in the following.

\section{Some notations and established results}\label{section4}
By a \emph{rank-$n$ basic interval} we mean
\begin{center}
$I_n(a_1,a_2,\cdots, a_n):=\{x\in (0,1): a_1(x)=a_1, \cdots, a_n(x)=a_n\}$
\end{center}
for a fixed sequence of positive integers $a_1, a_2,\cdots,a_n\in\mathbb{N}$. Its length can be explicitly expressed as a function of $\{a_1,a_2,\cdots, a_n\}$. If one sets 
\begin{center}
$p_{-1}=1, p_0=0, q_{-1}=0, q_0=1$ 
\end{center}
and 
\begin{center}
$p_n=a_np_{n-1}+p_{n-2}, q_n=a_nq_{n-1}+q_{n-2}$ for $n\geq 1$,
\end{center}
then
\begin{center}
$|I_n(a_1,a_2,\cdots, a_n)|=\frac{1}{q_n(q_n+q_{n-1})}$.
\end{center}
Especially, we use the following estimation,
\begin{center}
$\frac{1}{2q_n^2}<|I_n(a_1,a_2,\cdots, a_n)|<\frac{1}{q_n^2}$.
\end{center}

According to I. G. Good \cite[P209]{Goo}, a covering set of intervals whose elements are all basic intervals is called a \emph{fundamental covering system}. We always use fundamental covering systems throughout the work. It is enough to restrict the coverings to be fundamental ones on discussing dimensions of sets in continued fractions. The following result says that adding or neglecting finitely many partial quotients does not affect the dimension of the set.
\begin{corollary}[Good]\label{corollary2}
Let $\{\mathcal{E}_i\}_{i=1}^\infty$ be a sequence of non-empty sets of positive integers. Let
\begin{center}
$V=\{x=[a_1,a_2,a_3,\cdots]: a_i\in\mathcal{E}_i, i\in\mathbb{N}\}$.
\end{center}
For a finite number $n\in\mathbb{N}$, let $\{\mathcal{E}'_i\}_{i=1}^n$ be a finite sequence of non-empty sets of positive integers. Let 
\begin{center}
$U=\{x=[a_1,a_2,a_3,\cdots]: a_i\in\mathcal{E}'_i\mbox{ for } 1\leq i\leq n,\ a_i\in\mathcal{E}_i\mbox{ for } i>n\}$.
\end{center}
Then $dim_H V=dim_H U$.
\end{corollary} 


For a sequence of positive numbers $\varphi(n), n\in\mathbb{N}$, let
\begin{center}
$G_{\varphi(n)}:=\{x\in(0,1): a_n(x)\geq \varphi(n)\ i.o.\ n\}$
\end{center}
in which $i.o.$ reads \emph{infinitely often}. Good \cite{Goo} have ever gave bounds on $dim_H G_{\varphi(n)}$ for some $\varphi(n)$, T. \L{}uczak \cite[Theorem]{Luc} (see also \cite{FWLT}) showed $dim_H G_{c^{b^n}}=\frac{1}{b+1}$ for any $b, c>1$. In 2008, B. Wang and J. Wu \cite{WW} determind precise values of $dim_H G_{\varphi(n)}$ for any $\varphi(n)$, which greatly strengthens Good and \L{}uczak's results. They proved that
\begin{theorem}[Wang-Wu]\label{Theorem6}
Suppose $\liminf_{n\rightarrow\infty}\frac{\log\varphi(n)}{n}=\log B$, in the case $B=\infty$, let $\liminf_{n\rightarrow\infty}\frac{\log\log\varphi(n)}{n}=\log b$, then
\begin{center}
$ dim_H G_{\varphi(n)}=\left\{
\begin{array}{lll}
1 & \mbox{ if } B=1 \\
s_B & \mbox{ if } 1<B<\infty\\
1/(1+b) & \mbox{ if } B=\infty, 1\leq b\leq\infty\\
\end{array}
\right.$ 
\end{center} 
in which $1/2<s_B<1$ is monotone decreasing for $1\leq B<\infty$.
\end{theorem}
If we define $\P(s)=\lim_{n\to\infty}\frac{1}{n}\log \sum_{(a_1,\cdots, a_n)\in\mathbb{N}^n} (\frac{1}{B^nq_n^2})^s$ as the pressure function, then $s_B=\inf\{s: \P(s)\le 0\}$ in the theorem. One is recommended to \cite{MU2} and \cite{Wal} for more general pressure functions.

 Now we introduce some notations and results by E. Cesaratto and B. Vall\'ee \cite{CV} as well as G. Iommi and T. Jordan \cite{IJ1,IJ2}. We only state their results in some simple cases, which will be enough for our uses. Their original results are in far more general contexts. Denote by $F_\alpha=\{x\in (0,1): \frac{s_n(x)}{n}\leq \alpha \mbox{ for any } n\in\mathbb{N}\}$, $\gamma$ is the Euler constant. Then according to \cite[Theorem 2]{CV},
\begin{theorem}[Cesaratto-Vall\'ee]\label{Theorem7}
For any $\theta<2$, we have $|dim_H F_{\alpha}-1|=\frac{6}{\pi^2}e^{-1-\theta}2^{-\alpha}(1+O(\theta^{-\alpha})).$
\end{theorem}
While their result is focused essentially on the exponential convergent rate of $dim_H F_{\alpha}$ to $1$ as $\alpha\rightarrow\infty$, we only use the fact $dim_H F_{\alpha}<1$ for any $\alpha\in (0,\infty)$.

For a $T$-invariant (for most of our work we use the transformation $T=G(x)$ only) probability measure $\mu$, let $h(\mu)$ be its measure theoretical entropy and $\lambda(\mu)$ be its Lyapunov exponent. Let $\mathcal{M}_T$ be the set of all $T$-invariant probability measures \cite{IJ1, IJ2}. By applications of their variational results on H-dimensions of level sets determined by the Birkhoff averages of some continuous functions to continued fractions, Iommi and Jordan show that (\cite[Corollary 6.6, Proposition 6.7]{IJ1})
\begin{proposition}[Iommi-Jordan]\label{proposition3} 
$0\leq dim_H E_{n,\alpha}<1$ is real analytic, strictly increasing for $0\leq \alpha<\infty$. Moreover,
\begin{center}
 $\lim_{\alpha\rightarrow\infty} dim_H E_{n,\alpha}=1$.
\end{center}
 
\end{proposition} 
This means that for any $0\leq r<1$ there exists an unique $\alpha_r$ such that $dim_H E_{n,\alpha_r}=r$. The Proposition will be exploited in our Proposition \ref{proposition2}. 

Now we recover a result of A. Fan, L. Liao, B. Wang and J. Wu \cite[Lemma 3.2]{FLWW1} as following.
\begin{lemma}[Fan-Liao-Wang-Wu]\label{lemma3}
Let $\{s_n\}_{n=1}^\infty$ be a sequence of positive integers tending to infinity with $s_n\geq 3$ for all $n$. Then for any positive number $N\geq 2$,
\begin{center}
$dim_H\{x\in(0,1): s_n\leq a_n(x)\leq Ns_n \mbox{ for }any\ n\in\mathbb{N}\}=\big(2+\limsup_{n\rightarrow\infty}\frac{\log s_{n+1}}{\log s_1s_2\cdots s_n}\big)^{-1}$.
\end{center}
\end{lemma} 
The lemma is generalized to the following form by Liao and Rams \cite[Lemma 2.3]{LR2}.
\begin{lemma}[Liao-Rams]\label{lemma4}
For two sequences $\vec{s}=\{s_n\}_{n\geq 1}, \vec{t}=\{t_n\}_{n\geq 1}$ with $s_n\geq 1, t_n>1$ for any $n\in\mathbb{N}$. Let 
\begin{center}
$F(\vec{s},\vec{t}):=\{x\in(0,1): s_n\leq a_n(x)\leq s_nt_n \mbox{ for any } n\in\mathbb{N}\}$.
\end{center}
Now if $F(s,t)\neq\emptyset$, $\lim_{n\rightarrow\infty}s_n\rightarrow\infty$ and $\lim_{n\rightarrow\infty}\frac{\log(t_n-1)}{\log s_n}=0$, then
\begin{center}
$dim_H F(\vec{s},\vec{t})=\big(2+\limsup_{n\rightarrow\infty}\frac{\log s_{n+1}}{\log s_1s_2\cdots s_n}\big)^{-1}$.
\end{center}
\end{lemma}
They are very useful in dealing with sets with dimensions $\leq 1/2$. For various applications of them, see \cite{FLWW1} \cite{JR} \cite{LR1} and \cite{LR2}. 

At last we remind two properties of the Gauss map $G(x)$ (in general, of smooth hyperbolic iterated function systems). The first is called bounded distortion property: there exists $K>1$ such that for every $n$, for every $a_1,\ldots,a_n$ and for every $x\in [0,1]$ we have
$$
|f_{a_1,\ldots,a_n}'(x)| \geq \frac 1K |I_n(a_1,\ldots,a_n)|,
$$
where $f_{a_1,\ldots,a_n}$ is the inverse branch of the $n$-th iteration of the Gauss map, moving $[0,1]$ to $I_n(a_1,\ldots,a_n)$. The second is exponential contraction: there exists $\lambda <1$ such that for every $n$, for every $a_1,\ldots,a_n$,
$$
|I_n(a_1,\ldots,a_n)| < \lambda^n.
$$
For basics  about iterated function systems, one is recommended to \cite{MU1} and \cite{MU2}.

\section{Bound on the upper growth rate}

In this section we prove the second necessary condition in Theorem \ref{Theorem1} on the upper growth rate of $\varphi(n), n\in\mathbb{N}$ for the set $E_{\varphi(n)}$ to be of dimension 1. Bound on the lower growth rate for sequences $\{\varphi(n)\}_{n=1}^\infty$ with full dimensional level sets will be proved in Section \ref{section3}. 
\begin{theorem}\label{Theorem3}
For a non-decreasing sequence $\varphi(n), n\in\mathbb{N}$, if $dim_H E_{\varphi(n)}=1$, then 
\begin{center}
$\liminf_{n\rightarrow\infty}\frac{\log\varphi(n)}{n}=0$.
\end{center}

\end{theorem}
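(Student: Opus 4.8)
The plan is to argue by contradiction along the same lines as Lemma \ref{lemma1}, but now the comparison set will be one of the sets $G_{\psi(n)}$ controlled by WW's Theorem rather than the set $F_\alpha$ controlled by CV's Theorem. Suppose $\liminf_{n\to\infty}\frac{\log\varphi(n)}{n}=\log B$ with $B>1$ (allowing $B=\infty$). The idea is that if the partial sums $s_n(x)$ are to stay comparable to $\varphi(n)$ for \emph{every} large $n$, then along the subsequence where $\frac{\log\varphi(n)}{n}$ is close to its liminf the individual partial quotient $a_n(x)$ cannot be too large too often; more precisely, for $x\in E_{\varphi(n)}$ one has $a_n(x)\le s_n(x)\le 2\varphi(n)$ for all large $n$, so $E_{\varphi(n)}$ is contained in a set of the form $\{x: a_n(x)\le 2\varphi(n)\text{ eventually}\}$, which is the complement (up to a set handled by Good's Corollary) of $G_{2\varphi(n)}$ — but that complement can still be full-dimensional, so this crude bound alone is not enough.

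The refinement I expect to need is a counting/pressure estimate: being forced to have $a_j(x)\le 2\varphi(j)$ for all $j$ up to $n$ restricts the number of admissible rank-$n$ cylinders, and when $\varphi$ grows at least geometrically (i.e. $B>1$) this restriction is strong enough to push the dimension strictly below $1$. Concretely, I would fix a subsequence $n_k\to\infty$ with $\frac{\log\varphi(n_k)}{n_k}\to\log B$, choose $B'$ with $1<B'<B$, so that $\varphi(n)\le (B')^n$ fails only finitely often is the \emph{wrong} direction — instead $\varphi(n_k)\ge (B')^{n_k}$ for large $k$ gives a lower bound on the allowed partial quotients, which is not what constrains dimension. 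So the correct move is to use the \emph{upper} control coming from $\liminf$: for every $\varepsilon>0$ there are infinitely many $n$ with $\varphi(n)\le e^{(\log B+\varepsilon)n}$, and on $E_{\varphi(n)}$ at such an $n$ we need $s_n(x)\le 2e^{(\log B+\varepsilon)n}$. This bounds $\prod_{j=1}^n a_j(x)$ — hence $q_n$ — from above: since the $a_j$ are positive integers with bounded sum, $\log q_n\lesssim s_n \lesssim e^{(\log B+\varepsilon)n}$, which is far too weak. The genuinely useful bound comes instead from combining with Khinchin-type regularity, or better, from directly comparing with $G_{\psi}$: if $dim_H E_{\varphi(n)}=1$ then in particular $E_{\varphi(n)}$ meets, for every $M$, the set where $a_n(x)\ge M$ infinitely often, forcing $\varphi(n)\to\infty$ along those $n$ at least linearly in... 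This suggests the cleanest route is:

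\textbf{Step 1.} Show $E_{\varphi(n)}\subseteq G_{\varphi(n)/2}$ up to finitely many coordinates: if $\lim_n s_n(x)/\varphi(n)=1$ then for infinitely many $n$, $a_n(x)=s_n(x)-s_{n-1}(x)\ge \varphi(n)-s_{n-1}(x)$, and since $s_{n-1}(x)\le (1+o(1))\varphi(n-1)\le (1+o(1))\varphi(n)$, one needs the increments to be sizeable infinitely often — indeed $\limsup_n a_n(x)/\varphi(n)>0$ must hold for a full-dimensional subset, placing a full-dimensional piece of $E_{\varphi(n)}$ inside $G_{c\varphi(n)}$ for a suitable $c>0$. \textbf{Step 2.} Apply WW's Theorem to $G_{c\varphi(n)}$: $\liminf_n\frac{\log(c\varphi(n))}{n}=\log B$, and if $B>1$ then $dim_H G_{c\varphi(n)}=s_B<1$ (or $=1/(1+b)<1$ if $B=\infty$). \textbf{Step 3.} Conclude $dim_H E_{\varphi(n)}\le dim_H G_{c\varphi(n)}<1$, contradicting the hypothesis; hence $B=1$, i.e. $\liminf_n\frac{\log\varphi(n)}{n}=0$. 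The main obstacle is Step 1: turning the almost-everywhere-style increment heuristic into a genuine set inclusion valid for \emph{every} $x$ in a full-dimensional subset of $E_{\varphi(n)}$, since a priori the increments $a_n(x)$ could be spread out so that no single $a_n$ is comparable to $\varphi(n)$ while the sum still tracks $\varphi(n)$. Handling this likely requires a dichotomy: either infinitely many increments are $\ge \varphi(n)/n$ (putting $x$ into $G_{\varphi(n)/n}$, and $\liminf\frac{\log(\varphi(n)/n)}{n}=\liminf\frac{\log\varphi(n)}{n}$, so WW still applies with the same $B$), or all but finitely many increments are $\le\varphi(n)/n$, in which case $s_n(x)\le \sum_{j\le n}\varphi(j)/j \le \varphi(n)\sum_{j\le n}1/j = \varphi(n)\log n\,(1+o(1))$ — consistent, so one recurses, and the recursion terminates only when $B=1$. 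Making this recursion rigorous, with the right auxiliary sequence replacing $\varphi(n)/n$ at each stage so that the liminf of $\frac{\log(\cdot)}{n}$ is preserved, is the technical heart of the argument.
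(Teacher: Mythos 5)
Your overall strategy — argue by contradiction, pick $B>1$ with $\varphi(n)\gtrsim B^{n}$, and force a comparison with $G_{\psi}$-type sets so that WW's Theorem caps the dimension below $1$ — is exactly the strategy the paper uses, and the first branch of your dichotomy (``$a_n(x)\geq\varphi(n)/n$ i.o.\ $\Rightarrow x\in G_{\varphi(n)/n}$, and $\liminf\frac{\log(\varphi(n)/n)}{n}=\log B$, so WW applies'') is sound. But there is a genuine gap in the second branch, and you say so yourself: you cannot close the case where $a_n(x)\leq\varphi(n)/n$ for all large $n$, because your bound $\sum_{j\leq n}\varphi(j)/j\leq\varphi(n)\log n\,(1+o(1))$ is still compatible with $s_n(x)\sim\varphi(n)$, so no contradiction falls out, and the ``recursion'' you sketch to handle this is not actually formulated. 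That is not a cosmetic issue; as written, the proof does not go through.

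The paper closes this gap by choosing the dichotomy threshold more carefully. Fix $B$ strictly \emph{below} $e^{\liminf\log\varphi(n)/n}$ and split $E_{\varphi(n)}$ at the threshold $B^n$ (a genuine geometric sequence), not $\varphi(n)/n$. On $E_1=\{a_n(x)>B^n\text{ i.o.}\}$ WW's Theorem gives $\dim_H E_1\leq s_B<1$, as in your first branch. The point of the other branch is now concrete: if $a_i(x)\leq B^i$ for all $i\geq k$, then since $x\in E_{\varphi(n)}$ forces $s_n(x)>\sum_{i\leq n}B^i$ for all large $n$ (this is where $B$ being chosen slightly too small is used), the quotients have essentially no room to drop, and one shows $\frac12 B^i\leq a_i(x)\leq B^i$ eventually. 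Sets with partial quotients pinned to such a geometric corridor are exactly what FLWW's Lemma handles, and it yields $\dim_H E_2\leq 1/2$. Combining the two branches gives $\dim_H E_{\varphi(n)}<1$, the desired contradiction. In short, the ingredient you were missing is FLWW's Lemma (or LR's Lemma) applied to the ``small-quotient'' branch, together with the observation that the right threshold is $B^n$, not $\varphi(n)/n$, because it makes both branches tractable simultaneously.

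You should also note that the paper records an even shorter route (attributed to B.~Wang in a remark) which dispenses with the dichotomy entirely: with $B$ chosen slightly below the liminf value one gets $\varphi(n)>\frac{n}{1-\epsilon}B^n$ for large $n$, hence $s_n(x)>nB^n$, hence $\max_{1\leq k\leq n}a_k(x)>B^n\geq B^k$ for every $k\leq n$, which forces $a_k(x)>B^k$ for infinitely many $k$. This gives the clean inclusion $E_{\varphi(n)}\subseteq G_{B^n}$ directly and WW's Theorem finishes. That is precisely the ``Step 1'' inclusion you were reaching for; the trick you did not find is to exploit the spare factor of $n$ (coming from $B$ being strictly below the liminf) to turn the bound on the \emph{sum} into a bound on the \emph{maximum} of the first $n$ quotients, and then use that the threshold $B^n$ dominates $B^k$ for all $k\leq n$.
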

\begin{proof}
We again show this by reduction to absurdity. Suppose that there exists a non-decreasing sequence $\varphi(n), n\in\mathbb{N}$, such that $\liminf_{n\rightarrow\infty}\frac{\log\varphi(n)}{n}=\liminf_{n\rightarrow\infty}\frac{\log\varphi(n)}{n+1}> 0$ and $dim_H E_{\varphi(n)}=1$. Then we can find some $B>1$ and small $\epsilon>0$, such that for some $N\in\mathbb{N}$ and all $n>N$,
\begin{center}
$\frac{\log\varphi(n)}{n+1}>\log B+\epsilon$.
\end{center} 
So we have $\varphi(n)>B^{n+1}e^{(n+1)\epsilon}$. By choosing $N$ large enough, we can guarantee that for all $n>N$,
\begin{center}
$\varphi(n)>\frac{1}{(B-1)(1-\epsilon)}B^{n+1}$.
\end{center}

Now consider the set $E_{\varphi(n)}$ for this $\varphi(n)$. If $x\in E_{\varphi(n)}$, then for $n$ large enough, without loss of generality (by Corollary \ref{corollary2}), suppose for all $n>N$,
\begin{center}
$\frac{s_n(x)}{\varphi(n)}>1-\epsilon$.
\end{center} 
Then for all $n>N$,
\begin{center}
$s_n(x)>(1-\epsilon)\varphi(n)>\frac{1}{B-1}B^{n+1}>\sum_{k=1}^n B^k$.
\end{center}

Now split $E_{\varphi(n)}$ into two sets $E_1$ and $E_2$,
\begin{center}
$E_1=\{x\in E_{\varphi(n)}: a_n(x)>B^n\ i.o.\ n\}$,
\end{center}
\begin{center}
$E_2=\{x\in E_{\varphi(n)}: a_n(x)>B^n \mbox{ for only finitely many values of }n\}$.
\end{center}
Clearly we have
\begin{equation}\label{equation1}
E_{\varphi(n)}=E_1\cup E_2.
\end{equation}
By Theorem \ref{Theorem6}, 
\begin{equation}\label{equation4}
dim_H E_1<1.
\end{equation}
For $dim_H E_2$, note that  
\begin{equation}\label{equation2}
E_2=\cup_{k=1}^\infty\{x\in E_{\varphi(n)}: a_i(x)\leq B^i \mbox{ for all }i\geq k\}
\end{equation}
We claim that 
\begin{center}
$dim_H \{x\in E_{\varphi(n)}: a_i(x)\leq B^i \mbox{ for all }i\geq k\}\leq 1/2$
\end{center}
for any $k\geq 1$. Considering (\ref{equation2}) This will force
\begin{equation}\label{equation3}
dim_H E_2\leq 1/2
\end{equation}
We only show the claim in the case $k=2$, proofs for the general cases are similar.  When $k=2$, let 
\begin{center}
$A_j=\{x\in E_{\varphi(n)}: a_1(x)=j, \mbox{ there exists some } N^*\in\mathbb{N}, \mbox{ such that for all } i>N^*, \frac{1}{2}B^i \leq a_i(x)\leq B^i \}$.
\end{center}
We have the following decomposition:
\begin{center}
$\{x\in E_{\varphi(n)}: a_i(x)\leq B^i \mbox{ for all }i\geq 2\}=\cup_{j=1}^\infty A_j.$
\end{center}
This is because if there is an infinite sequence $\{i_l\in\mathbb{N}\}_{l=1}^\infty$, such that $a_{i_l}(x)<\frac{1}{2}B^{i_l}$, then for any fixed $j$ and $x\in A_j$, the inequality
\begin{center}
$s_n(x)=\sum_{i=1}^n a_i(x)>\sum_{i=1}^n B^i$
\end{center}
can not hold for all $n>N$, considering the two restrictions $a_i(x)\leq B^i$ for all $i\geq 2$ and $a_{i_l}(x)<\frac{1}{2}B^{i_l}$ for all $l\geq 1$. Now by Lemma \ref{lemma3} or \ref{lemma4}, 
\begin{center}
$dim_H A_j\leq 1/2$.
\end{center} 
So the claim is true, which successively justifies (\ref{equation3}). 

Finally, combining (\ref{equation1}) (\ref{equation4}) and (\ref{equation3}), we get
\begin{center}
$dim_H E_{\varphi(n)}<1$
\end{center}
under the assumptions on the sequence $\varphi(n)$. This contradicts the assumption that $dim_H E_{\varphi(n)}=1$, so the theorem follows.

\end{proof}
\begin{remark}
In an earlier version of the paper, the author showed that a non-decreasing sequence $\varphi(n)$ with $dim_H E_{\varphi(n)}=1$ must satisfy  
\begin{center}
$\liminf_{n\rightarrow\infty}\frac{\log\varphi(n)}{n}\leq \log\zeta(2)$
\end{center}
in which $\zeta(\cdot)$ is the Riemann zeta function, based on \cite[Theorem 5]{Goo}. Shortly after Prof. B. Wang told me that the bound can be decreased from $\log\zeta(2)$ to $0$ by Theorem \ref{Theorem6}. The proof now is modified according to his comments. Prof. Wang also gave an easier proof of the theorem, however, we feel the old one still contains something interesting and the idea will be retrieved briefly in Corollary \ref{corollary1}, so we decide only to modify it to some extent. Here is Prof. Wang's proof of Theorem \ref{Theorem3}: suppose there exists a non-decreasing sequence $\varphi(n)$ with $dim_H E_{\varphi(n)}=1$ and $\liminf_{n\rightarrow\infty}\frac{\log\varphi(n)}{n}>0$, then for some small $\epsilon>0$ and $B>1$, we have
\begin{center}
$\varphi(n)>\frac{n}{1-\epsilon}B^n$  
\end{center}
for $n$ large enough. If $x\in E_{\varphi(n)}$, then for $n$ large enough, $s_n(x)>(1-\epsilon)\varphi(n)>nB^n$. So there exists $1\leq k\leq n$, such that $a_k(x)> B^n\geq B^k$. This implies
\begin{center}
$E_{\varphi(n)}\subset\{x\in(0,1): a_n(x)>B^n\ i.o.\ n\}$.
\end{center} 
Then by WW's Theorem, $dim_H E_{\varphi(n)}\leq s_B<1$, which contradicts the assumptions. 
\end{remark}

It is easy to see that, from WW's Theorem, we can show 
\begin{corollary}
For a non-decreasing sequence $\varphi(n), n\in\mathbb{N}$ and $1/2<s_B< 1$, if $dim_H E_{\varphi(n)}\geq s_B$, then 
\begin{center}
$\liminf_{n\rightarrow\infty}\frac{\log\varphi(n)}{n}\leq \log B$.
\end{center}
\end{corollary}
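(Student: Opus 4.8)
The plan is to mimic the contrapositive structure of Theorem~\ref{Theorem3} but stop the decomposition at the level where WW's Theorem already gives the bound $s_B$. Suppose $dim_H E_{\varphi(n)}\geq s_B$ for some $B$ with $1/2<s_B<1$, yet $\liminf_{n\rightarrow\infty}\frac{\log\varphi(n)}{n}=\log B'>\log B$ for some $B'>B$; we aim for a contradiction. First I would record, exactly as in the proof of Theorem~\ref{Theorem3}, that there is a choice of base $C$ with $B<C<B'$ and a small $\epsilon>0$ so that $\varphi(n)>\frac{1}{(C-1)(1-\epsilon)}C^{n+1}$ for all large $n$, and hence for $x\in E_{\varphi(n)}$ one has $s_n(x)>\sum_{k=1}^n C^k$ for all large $n$ (after discarding finitely many partial quotients via Good's Corollary).

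Next I would split $E_{\varphi(n)}=E_1\cup E_2$ with $E_1=\{x\in E_{\varphi(n)}: a_n(x)>C^n\ i.o.\ n\}$ and $E_2$ its complement inside $E_{\varphi(n)}$. For $E_1$, note $E_1\subset G_{C^n}=\{x: a_n(x)\geq C^n\ i.o.\}$, and since $\liminf_{n\to\infty}\frac{\log C^n}{n}=\log C>\log B$, WW's Theorem together with the stated monotonicity of $s_B$ in $B$ gives $dim_H E_1\leq dim_H G_{C^n}=s_C<s_B$. For $E_2$, I would run the same argument as in Theorem~\ref{Theorem3}: write $E_2$ as a countable union over $k$ of the sets where $a_i(x)\leq C^i$ for all $i\geq k$, and show each such set has dimension $\leq 1/2$ by the $A_j$-decomposition forcing the lower bound $\frac12 C^i\leq a_i(x)\leq C^i$ eventually and then invoking FLWW's Lemma. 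Since $s_B>1/2$, this gives $dim_H E_2\leq 1/2<s_B$ as well.

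Combining the two pieces, $dim_H E_{\varphi(n)}=\max\{dim_H E_1,dim_H E_2\}\leq\max\{s_C,1/2\}<s_B$, contradicting $dim_H E_{\varphi(n)}\geq s_B$. Hence $\liminf_{n\rightarrow\infty}\frac{\log\varphi(n)}{n}\leq\log B$, which is the claim. The main obstacle — really the only subtlety — is making the $E_1$ bound quantitatively strict: one needs $s_C<s_B$, which requires choosing $C$ genuinely larger than $B$ (possible because $\liminf\frac{\log\varphi(n)}{n}>\log B$ was assumed strictly) and then appealing to the strict monotonicity of $B\mapsto s_B$ asserted in WW's Theorem; the $E_2$ half is purely a transcription of the argument already given in Theorem~\ref{Theorem3} and needs no new idea. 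One could alternatively use Prof.\ Wang's shorter route: show directly $E_{\varphi(n)}\subset G_{C^n}$ without the $E_1/E_2$ split, since $s_n(x)>nC^n$ eventually forces some $a_k(x)>C^n\geq C^k$ with $k\leq n$; this gives $dim_H E_{\varphi(n)}\leq s_C<s_B$ in one line, and is the cleanest way to present the corollary.
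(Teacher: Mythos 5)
Your proposal is correct, and your closing observation — that Prof.\ Wang's direct inclusion $E_{\varphi(n)}\subset G_{C^n}$ for any $B<C<B'$ gives $\dim_H E_{\varphi(n)}\le s_C<s_B$ in one step — is exactly the ``easy to see from WW's Theorem'' route the paper has in mind (the paper offers no written proof of this corollary, only that remark). The longer $E_1/E_2$ decomposition you sketch first also works, but it is redundant here since the $E_2$ piece contributes nothing new once $s_B>1/2$; the one subtlety you correctly flag in either route is that the strict inequality $s_C<s_B$ relies on the strict decrease of $B\mapsto s_B$ asserted in WW's Theorem, and you have chosen $C$ strictly between $B$ and $B'$ precisely to invoke it.
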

\begin{corollary}
For a non-decreasing sequence $\varphi(n), n\in\mathbb{N}$ and $s_b=\frac{1}{b+1}\leq\frac{1}{2}, 1\leq b\leq\infty$, if $dim_H E_{\varphi(n)}\geq s_b$, then 
\begin{center}
$\liminf_{n\rightarrow\infty}\frac{\log\log\varphi(n)}{n}\leq \log b$.
\end{center}
\end{corollary}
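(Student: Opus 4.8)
The plan is to adapt Prof.\ B.\ Wang's proof of Theorem~\ref{Theorem3} to the regime $B=\infty$ of WW's Theorem, replacing the threshold $B^n$ there by a threshold of the form $e^{b_2^n}$. First dispose of the trivial case $b=\infty$: then $\log b=\infty$ and $\liminf_{n\rightarrow\infty}\frac{\log\log\varphi(n)}{n}\le\log b$ holds automatically, so from now on assume $1\le b<\infty$.

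Argue by contradiction: suppose $dim_H E_{\varphi(n)}\ge s_b=\frac{1}{b+1}$ yet $\liminf_{n\rightarrow\infty}\frac{\log\log\varphi(n)}{n}>\log b$. Choose reals $b_1,b_2$ with $b<b_2<b_1<\infty$ and a small $\epsilon>0$. By the choice of $b_1$ there is $N$ with $\varphi(n)>e^{b_1^n}$ for all $n>N$. If $x\in E_{\varphi(n)}$, then by Good's Corollary we may assume $s_n(x)>(1-\epsilon)\varphi(n)$ for all $n>N$; since $(1-\epsilon)e^{b_1^n}/\big(n\,e^{b_2^n}\big)=(1-\epsilon)e^{b_1^n-b_2^n}/n\to\infty$ (because $b_1>b_2\ge 1$), after enlarging $N$ we get $s_n(x)>n\,e^{b_2^n}$ for all $n>N$. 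As $s_n(x)=\sum_{k=1}^{n}a_k(x)$, for each such $n$ there is $1\le k\le n$ with $a_k(x)>e^{b_2^n}\ge e^{b_2^k}$. The indices $k$ produced this way must be unbounded: if they all lay below some $M$, then $a_k(x)>e^{b_2^n}\to\infty$ with $k\le M$ would be impossible since $a_1(x),\dots,a_M(x)$ are fixed finite integers. Hence $x$ has $a_k(x)\ge e^{b_2^k}$ for infinitely many $k$, i.e.\ $x\in G_{\psi(n)}$ with $\psi(n):=e^{b_2^n}$. Therefore $E_{\varphi(n)}\subset G_{\psi(n)}$.

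Now read off the growth rates of $\psi$: $\log\psi(n)=b_2^n$ gives $\liminf_{n\rightarrow\infty}\frac{\log\psi(n)}{n}=\infty$, so $B=\infty$ for $\psi$, while $\log\log\psi(n)=n\log b_2$ gives $\liminf_{n\rightarrow\infty}\frac{\log\log\psi(n)}{n}=\log b_2$. WW's Theorem then yields $dim_H G_{\psi(n)}=\frac{1}{1+b_2}$, whence $dim_H E_{\varphi(n)}\le\frac{1}{1+b_2}<\frac{1}{1+b}=s_b$ since $b_2>b$, contradicting $dim_H E_{\varphi(n)}\ge s_b$. This proves the corollary. The only parts requiring care are the elementary estimate $e^{b_1^n}>n\,e^{b_2^n}$ for large $n$ and the unboundedness of the indices $k$; no genuine obstacle is expected, the one new wrinkle compared with the $1<B<\infty$ case being the insertion of an auxiliary exponent $b_2$ strictly between $b$ and the assumed $\liminf$ to absorb the polynomial factor $n$ coming from $s_n(x)=\sum_{k\le n}a_k(x)$.
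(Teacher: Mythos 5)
Your proof is correct and follows exactly the route the paper intends: the corollary is stated without proof immediately after the phrase ``from WW's Theorem, we can show,'' and the expected argument is the direct adaptation of Prof.\ Wang's proof of Theorem~\ref{Theorem3} to the $B=\infty$ branch of WW's Theorem, which is what you do. The one new wrinkle you correctly identify and handle is inserting the intermediate exponent $b_2\in(b,b_1)$ so that $e^{b_1^n}$ dominates $n\,e^{b_2^n}$ and the pigeonhole gives $a_k(x)>e^{b_2^k}$ for infinitely many $k$; you also spell out the unboundedness of those indices $k$, which the paper's sketch for Theorem~\ref{Theorem3} leaves implicit.
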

There are more discussions on the techniques used in the proof of Theorem \ref{Theorem3} in Section \ref{section1}.

\section{Proof of Theorem \ref{Theorem2}}\label{section2}
In this section we give an explicit non-decreasing sequence $\varphi(n), n\in\mathbb{N}$ satisfying the properties in Theorem \ref{Theorem2}, based on Wu and Xu's work \cite{WX1}. For $N\in\mathbb{N}$, take a sequence $\{n_k=k^N\}_{k=1}^\infty$. For $0<\beta<1$, let
\begin{center}
$ \varphi(n)=\left\{
\begin{array}{ll}
e^{n_k^{\beta}} &\ \ n=n_k \mbox{ for some } k\in\mathbb{N} \\
e^{n_k^{\beta}}+n-n_k &\ \  n_k< n<n_{k+1} \mbox{ for some } k\in\mathbb{N}\\
\end{array}
\right.$ 
\end{center}
Without special declaration we always mean this sequence by the notation $\varphi(n)$ in this section. We will show that
\begin{proposition}\label{proposition1}
For $N$ large enough and the sequence $\varphi(n)$ defined above, we have
\begin{center}
$\lim_{n\rightarrow\infty}\frac{\log\log\varphi(n)}{\log n}=\beta$  and $dim_H E_{\varphi(n)}=1$.
\end{center} 
\end{proposition}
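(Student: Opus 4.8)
The plan is to treat the two assertions separately: the identity $\lim_{n\to\infty}\frac{\log\log\varphi(n)}{\log n}=\beta$ by a direct estimate, and $dim_H E_{\varphi(n)}=1$ by exhibiting, for each $M$, an explicit Cantor subset of $E_{\varphi(n)}$ of dimension $\ge d_M:=dim_H\{x\in(0,1):a_n(x)\le M\ \forall n\}$, then letting $M\to\infty$. For the first assertion: at the nodes $n=n_k=k^N$ one has $\log\varphi(n_k)=n_k^\beta=k^{N\beta}$, hence $\log\log\varphi(n_k)=N\beta\log k=\beta\log n_k$, so the ratio equals $\beta$ exactly there; for $n_k<n<n_{k+1}$ the correction satisfies $0<n-n_k\le n_{k+1}-n_k=O_N(k^{N-1})$, which is super-polynomially small compared with $e^{k^{N\beta}}$ (this is where $\beta>0$ is used), so $\log\varphi(n)=k^{N\beta}+o(1)$, $\log\log\varphi(n)=N\beta\log k+o(1)$, while $\log n\in[N\log k,N\log(k+1)]=N\log k\,(1+o(1))$, and dividing gives the ratio $\to\beta$ uniformly over each block, hence the limit is $\beta$. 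One also checks $\varphi$ is non-decreasing: inside a block it increases by $1$ at each step, and at a node one needs $e^{n_{k+1}^\beta}\ge e^{n_k^\beta}+(n_{k+1}-1-n_k)$, which holds for all $k$ past some $k_0$ because $e^{b}-e^{a}\ge e^{a}(b-a)$ and $e^{k^{N\beta}}$ outgrows every polynomial in $k$; the finitely many initial values are then replaced by a non-decreasing sequence, which alters neither $E_{\varphi(n)}$ nor the limit.

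For $dim_H E_{\varphi(n)}=1$ I would, fixing an integer $M\ge 2$, build $\mathcal F_M\subset E_{\varphi(n)}$ in the spirit of Wu--Xu (bounded digits almost everywhere, occasional huge digits along a sparse index set): the partial quotients with index $\le n_{k_0}$ are fixed (say all $1$); for $n$ with $n_k<n<n_{k+1}$, $k\ge k_0$, let $a_n(x)$ run freely through $\{1,\dots,M\}$; and at a node $n=n_k$, $k\ge k_0$, let $a_{n_k}(x)$ be the unique positive integer with $s_{n_k}(x)=\lceil e^{n_k^\beta}\rceil$. This last digit is $\ge 1$ since, by construction, $s_{n_k-1}(x)\le\lceil e^{n_{k-1}^\beta}\rceil+M\cdot O_N(k^{N-1})$, which is $\ll e^{k^{N\beta}}$ for $k$ large (again an exponential beating a polynomial), so $k_0$ can be chosen to make everything consistent. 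Then $\mathcal F_M\subset E_{\varphi(n)}$: at a node $s_{n_k}(x)/\varphi(n_k)=\lceil e^{n_k^\beta}\rceil/e^{n_k^\beta}\to 1$, and for $n_k<n<n_{k+1}$ one has $|s_n(x)-\varphi(n)|\le 1+(M-1)(n-n_k)=O_N(Mk^{N-1})$ while $\varphi(n)\ge e^{k^{N\beta}}$, so $s_n(x)/\varphi(n)\to 1$ uniformly over the block, whence $\lim_n s_n(x)/\varphi(n)=1$.

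It then remains to show $dim_H\mathcal F_M\ge d_M$; since it is classical (Good \cite{Goo}) that $d_M\to 1$ as $M\to\infty$, this finishes the proof. Let $\Lambda_M=\{x:a_n(x)\le M\ \forall n\}$, and let $\nu_M$ be a Frostman/Gibbs probability measure on $\Lambda_M$ with $\nu_M(I_m(b_1,\dots,b_m))\asymp \widetilde Q_m^{-2d_M}$ uniformly, $\widetilde Q_m$ the continued fraction denominator of $(b_1,\dots,b_m)$. Transport $\nu_M$ to $\mathcal F_M$ along the bijection that forgets the node digits, getting $\mu$ on $\mathcal F_M$. If a cylinder $I_n$ meets $\mathcal F_M$ and $b(n)=n-\lfloor n^{1/N}\rfloor\sim n$ is the number of non-node indices $\le n$, then $\mu(I_n)\asymp\widetilde Q_{b(n)}^{-2d_M}$, $|I_n|\asymp q_n^{-2}$, and the crucial estimate is $\log q_n=\log\widetilde Q_{b(n)}+\sum_{k:\,n_k\le n}\bigl(\log a_{n_k}(x)+O(1)\bigr)$; since $\log a_{n_k}(x)\le k^{N\beta}+1$ and $\sum_{k\le n^{1/N}}k^{N\beta}\asymp n^{\beta+1/N}$, this is $\log q_n=\log\widetilde Q_{b(n)}+O(n^{\beta+1/N})$. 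For $N>1/(1-\beta)$ the error is $o(n)$, whereas $\log\widetilde Q_{b(n)}\asymp b(n)\asymp n$, so for every $s<d_M$ one gets $\mu(I_n)\le C_s|I_n|^s$ for all cylinders $I_n$ (absorbing the finitely many small ones into $C_s$). Because on continued fraction Cantor sets it suffices to use fundamental (cylinder) coverings when computing Hausdorff dimension (Good's observation quoted in Section 2), this mass distribution estimate gives $\mathcal H^s(\mathcal F_M)>0$, hence $dim_H\mathcal F_M\ge s$, hence $\ge d_M$.

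The step I expect to be the main obstacle is this last dimension lower bound — ensuring that the astronomically large digits forced at the nodes $n_k=k^N$ do not destroy full dimension. This is exactly what forces $N$ large: the total node contribution to $\log q_n$ is $\sum_{k\le n^{1/N}}k^{N\beta}\asymp n^{\beta+1/N}$, which is $o(n)$ precisely when $N>1/(1-\beta)$, and only then does $\mu$ satisfy a Frostman bound with exponent $d_M-o(1)$. The secondary nuisance — the enormous scale gap between the cylinders $I_{n_k-1}$ and $I_{n_k}$, which would complicate a direct ball-counting argument — is side-stepped by restricting throughout to fundamental coverings, as licensed by Good's observation. Everything else (non-decreasingness, $\mathcal F_M\subset E_{\varphi(n)}$, the Gauss-map Gibbs estimate for $\nu_M$, and $d_M\to 1$) is routine.
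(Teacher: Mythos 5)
Your proposal is correct and follows essentially the same route as the paper: a Wu--Xu style Cantor subset of $E_{\varphi(n)}$ with digits bounded by $M$ except at the sparse nodes $n_k=k^N$, where huge forced digits keep $s_n\sim\varphi(n)$, together with the key estimate that the node digits contribute only $O(n^{\beta+1/N})=o(n)$ to $\log q_n$ (exactly the paper's inequality \textcircled{1}, valid for $N>1/(1-\beta)$), so the dimension is at least that of the bounded-digit set, which tends to $1$ as $M\to\infty$. The only differences are cosmetic: the paper fixes the node digit to $[e^{n_k^\beta}-e^{n_{k-1}^\beta}]$ and finishes via the cylinder-length comparison $|I_n|\geq|I_{n-t(n)}(\overline{\cdot})|^{1+\epsilon}$ plus the gap estimate as in \cite[Proof of Theorem 1.4]{WX1}, whereas you fix the partial sum at the nodes and finish by transporting a Gibbs measure and applying a mass-distribution bound on fundamental coverings, which is legitimate under the same Good-type reduction the paper invokes.
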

It is easy to check that $\limsup_{n\rightarrow\infty}\frac{\log\log\varphi(n)}{\log n}=\lim_{n\rightarrow\infty}\frac{\log\log\varphi(n)}{\log n}=\beta$. In the following we will demonstrate that $dim_H E_{\varphi(n)}=1$. These examples show that our upper bound for sequences with full dimensional level sets is, in some sense, the best one. The proof follows Wu and Xu's method \cite[3]{WX1}, we only give proofs where we feel necessary. We first give some notations following Wu and Xu. For an $M\in\mathbb{N}$, let
\begin{center}
$H_M=\{x\in(0,1): a_{n_k}(x)=[e^{n_k^{\beta}}-e^{n_{k-1}^{\beta}}] \mbox{ for }k\geq 2, 1\leq a_i(x)\leq M \mbox{ for } i\neq n_k \mbox{ for any } k\geq 1\}$.
\end{center}  
We will omit the integer notation $[\ ]$ in the following for simplicity, as results will not be affected. It is easy to show that $H_M\subset E_{\varphi(n)}$. For any $n\in\mathbb{N}$, let
\begin{center}
$A_n=\{(a_1, a_2, \cdots, a_n)\in\mathbb{N}^n: a_{n_k}=e^{n_k^{\beta}}-e^{n_{k-1}^{\beta}} \mbox{ for } 2\leq k\leq n^{1/N}, 1\leq a_i\leq M \mbox{ for } 1\leq i\neq n_k\leq n \mbox{ for any } k\geq 1\}$.
\end{center}
Obviously we have
\begin{center}
$H_M=\cap_{n\geq 1}\cup_{(a_1, \cdots, a_n)\in A_n} I_n(a_1, \cdots, a_n)$.
\end{center}
Let $\mathcal{A}=\{1,2,\cdots,M\}$, $t(n)=\#\{k: k^N\leq n\}\leq n^{1/N}$. For any $(a_1, \cdots, a_n)\in A_n$, let $\overline{(a_1, \cdots, a_n)}\in\mathcal{A}^{n-t(n)}$ be the finite sequence by deleting the terms $\{a_{n_k}\}_{k=1}^{t(n)}$ in $(a_1, \cdots, a_n)$. Let $\overline{[a_1, \cdots, a_n]}$ be the finite continued fraction of $\overline{(a_1, \cdots, a_n)}$. Now we show that
\begin{lemma}
For any $\epsilon>0$, there exists $N_0$, such that for any $n\geq N_0$ and $(a_1, \cdots, a_n)\in A_n$, we have
\begin{center}
$|I_n(a_1, \cdots, a_n)|\geq |I_{n-t(n)}\overline{(a_1, \cdots, a_n)}|^{1+\epsilon}$.
\end{center}
\end{lemma}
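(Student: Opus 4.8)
The length estimate $\frac{1}{2q_n^2}<|I_n(a_1,\cdots,a_n)|<\frac{1}{q_n^2}$ reduces the claim to a comparison of denominators: writing $q_n=q_n(a_1,\cdots,a_n)$ and $\overline q$ for the denominator of the continued fraction $\overline{[a_1,\cdots,a_n]}$, it suffices to show that $\log q_n\le(1+\epsilon)\log\overline q-\tfrac12\log 2$ for all $n$ large, uniformly over $(a_1,\cdots,a_n)\in A_n$. The plan is to bound $q_n$ from above in terms of $\overline q$ by peeling off the finitely many ``large'' partial quotients $a_{n_k}=e^{n_k^\beta}-e^{n_{k-1}^\beta}\le e^{n_k^\beta}$ using the continuant addition formula
\[
K(b_1,\cdots,b_m)=K(b_1,\cdots,b_j)K(b_{j+1},\cdots,b_m)+K(b_1,\cdots,b_{j-1})K(b_{j+2},\cdots,b_m).
\]
Since $q_n$ equals the continuant $K(a_1,\cdots,a_n)$, and continuants with positive integer entries are positive and do not decrease when an entry is appended at either end, this identity gives, for any concatenation $BC$ of words, the two-sided bound $K(B)K(C)\le K(BC)\le 2K(B)K(C)$.

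Next I would split $(a_1,\cdots,a_n)$ into the large entries $a_{n_k}$ (with $n_k\le n$) and the maximal blocks $B_1,\cdots,B_r$ of consecutive entries with index $\ne n_k$; by construction $\overline{(a_1,\cdots,a_n)}=B_1B_2\cdots B_r$, so that $\prod_{j=1}^r K(B_j)\le K(\overline{(a_1,\cdots,a_n)})=\overline q$ by iterating the lower bound $K(B)K(C)\le K(BC)$. Peeling off the large entries one by one from $K(a_1,\cdots,a_n)$, each removal costs a factor at most $4$ and a factor $K(a_{n_k})=a_{n_k}$, and what remains is exactly $\prod_j K(B_j)$; hence
\[
q_n\le 4^{\,t(n)}\Big(\prod_{k:\,n_k\le n}a_{n_k}\Big)\overline q .
\]
Taking logarithms, $\log q_n\le\log\overline q+t(n)\log 4+\sum_{k:\,k^N\le n}k^{N\beta}$, and since $t(n)\le n^{1/N}$ the two error terms are $O(n^{1/N})$ and at most $(n^{1/N})^{N\beta+1}=n^{\beta+1/N}$. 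On the other hand every entry of $\overline{(a_1,\cdots,a_n)}$ is $\ge1$, so $\overline q$ grows at least at Fibonacci speed, $\overline q\ge\phi^{\,n-t(n)-1}$ with $\phi=\tfrac{1+\sqrt5}{2}$, and since $t(n)=o(n)$ this gives $\log\overline q\ge cn$ for some $c>0$ and all $n$ large.

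To conclude, I would choose the exponent $N$ in $n_k=k^N$ large enough that $\beta+\tfrac1N<1$ — possible precisely because $\beta<1$, and this is exactly where ``$N$ large enough'' in Proposition \ref{proposition1} enters — so that both error terms become $o(n)=o(\log\overline q)$. Then $\log q_n=(1+o(1))\log\overline q$ as $n\to\infty$, and since $\log\overline q\to\infty$ we obtain $\log q_n\le(1+\epsilon)\log\overline q-\tfrac12\log 2$ for all $n$ past some threshold $N_0$, which is the desired inequality. The main obstacle is making the continuant bound tight enough: the crude estimate $q_n\le\prod_i(a_i+1)$ is too wasteful here, since $\prod_{i\ne n_k}(a_i+1)$ may be of size $(M+1)^n$ whereas $\overline q$ is only of order $\phi^n$, so their quotient would fail to be subexponential; one genuinely needs $\prod_j K(B_j)\le\overline q$ in order to retain the contribution of the small blocks. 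Minor bookkeeping is also needed for the initial index ($n_1=1$) and for the possibility of adjacent large entries, which in fact never occurs once $N\ge2$ since then $n_{k+1}=(k+1)^N>k^N+1=n_k+1$.
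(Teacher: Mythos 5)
Your proof is correct and takes essentially the same route as the paper: bound $q_n(a_1,\cdots,a_n)$ by $q_{n-t(n)}\overline{(a_1,\cdots,a_n)}$ times the subexponential contribution (of order $e^{O(n^{\beta+1/N})}$) of the deleted large quotients, then absorb that factor into the exponential lower bound on $q_{n-t(n)}$, which is exactly where the requirement that $N$ be large enough that $\beta+\frac{1}{N}<1$ enters. The only difference is cosmetic: the paper obtains the denominator comparison from Wu's Lemma 2.1 in the form $q_n\leq q_{n-t(n)}\overline{(a_1,\cdots,a_n)}\,(e^{n_{k-1}^{\beta}}-e^{n_{k-2}^{\beta}}+1)^{t(n)}$ and uses $q_m\geq 2^{(m-1)/2}$, whereas you rederive the same comparison from the continuant identity (with the harmless extra factor $4^{t(n)}$) and use the Fibonacci bound.
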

\begin{proof}
Assume $n_{k-1}\leq n< n_k$ for some $k\in\mathbb{N}$. First, by \cite[Lemma 2.1]{Wu}, we have
\begin{equation}\label{equation5}
|I_n(a_1, \cdots, a_n)|\geq\frac{1}{2q_n^2(a_1, \cdots, a_n)}\geq\frac{1}{2\big(q_{n-t(n)}\overline{(a_1, \cdots, a_n)}(e^{n_{k-1}^{\beta}}-e^{n_{k-2}^{\beta}}+1)^{t(n)}\big)^2}.
\end{equation}
Moreover, 
\begin{equation}\label{equation6}
2(e^{n_{k-1}^{\beta}}-e^{n_{k-2}^{\beta}}+1)^{2t(n)}\leq 2(e^{n_{k-1}^{\beta}})^{2t(n)}\leq 2e^{2n^\beta(1/n^N)}\leq_{\textcircled{1}} 2^{(n-t(n)-1)\epsilon}\leq_{\textcircled{2}} q_{n-t(n)}^{2\epsilon}\overline{(a_1, \cdots, a_n)}.
\end{equation}
The inequality \textcircled{1} holds as long as $N$ is large enough, while \textcircled{2} is due to $q_n\geq 2^{\frac{n-1}{2}}$ for any finite $n$-continued fraction. Now combining (\ref{equation5}) and (\ref{equation6}) we have
\begin{center}
$|I_n(a_1, \cdots, a_n)|\geq \frac{1}{2q_{n-t(n)}^{2(1+\epsilon)}\overline{(a_1, \cdots, a_n)}} \geq |I_{n-t(n)}\overline{(a_1, \cdots, a_n)}|^{1+\epsilon}$.
\end{center}
\end{proof}
\begin{remark}
From the proof one can see that the inequlity \textcircled{1} can hold only for $\beta<1$ no matter how small $t(n)$ is (recall $t(n)$ describes density of the terms $\{\varphi(n_k): n_k\leq n\}$ in $\{\varphi(j)\}_{j=1}^n$). This is the obstacle to generalize the method to cases $\beta\geq 1$. 
\end{remark}

Now for two points $\{x,y\}\subset H_M$ and $x<y$, there is one and only one integer $n\geq 1$, such that 
\begin{center}
$x\in I_{n+1}(a_1, \cdots, a_n, l_{n+1}), y\in I_{n+1}(a_1, \cdots, a_n, r_{n+1}), 1\leq l_{n+1}\neq r_{n+1}\leq M$
\end{center}
with
\begin{center}
$(a_1, \cdots, a_n)\in A_n, (a_1, \cdots, a_n, l_{n+1})\in A_{n+1}, (a_1, \cdots, a_n, r_{n+1})\in A_{n+1}$.
\end{center}
For the distance between $x$ and $y$, similar to \cite[Lemma 3.4]{WX1}, we can show
\begin{center}
$y-x\geq \frac{1}{(M+2)^3} I_n(a_1, \cdots, a_n)$.
\end{center} 

Now we are in a position to prove Proposition \ref{proposition1}.

Proof of Proposition \ref{proposition1}: 
\begin{proof}
This is of similar process as \cite[Proof of Theorem 1.4]{WX1}, with only a change of some constants.
\end{proof}

Theorem \ref{Theorem2} follows directly from Proposition \ref{proposition1}.

\section{The irregular case-proof of Theorem \ref{Theorem1}, \ref{Theorem4} and \ref{Theorem5}}\label{section3}
While cases of sums of partial quotients grow \emph{linearly} ($\varphi(n)=\alpha n, \alpha\in[1,\infty)$) were considered in \cite{CV} \cite{IJ1, IJ2}, Wu and Xu \cite{WX1}, Liao and Rams \cite{LR1} have dealt with the dimentional problem under the regular condition $\lim_{n\rightarrow\infty}\frac{\varphi(n)}{n}=\infty$ on the growth rate. It is termed the \emph{super-linear} case in \cite{WX1}. The restriction $\lim_{n\rightarrow\infty}\frac{\varphi(n)}{n}=\infty$ does guarantee some convenience and general results (the condition is also assumed in \cite{FLWW2} and \cite{LR2}) on studying $dim_H E_{\varphi(n)}$. However, there seems little known in the irregular case, in which we pay special attention to sequences $\{\varphi(n)\}_{n=1}^\infty$ satisfying 
\begin{equation}\label{equation12}
\liminf_{n\rightarrow\infty}\frac{\varphi(n)}{n}<\infty
\end{equation}
and
\begin{equation}\label{equation13}
\limsup_{n\rightarrow\infty}\frac{\varphi(n)}{n}=\infty.
\end{equation}
simutaneously. In this section we present some results in this case, by establishing some links between results on $dim_H E_{\varphi(n)}$ with various growth rates $\varphi(n)$. However, we are not the first to deal with some irregular growth rate considering \cite[Corollary 3]{DV} in continued fractions. Note that our sequences are even more ``irregular" than ones in \cite{DV} as $\frac{\varphi(n)}{n}$ can not be non-decreasing for $\varphi(n)$ satisfying \ref{equation12} and \ref{equation13}.  

For two finite $n$-sequences $\vec{\sigma}_n=(\sigma_1,\sigma_2,\cdots,\sigma_n)$ and $\vec{\tau}_n=(\tau_1,\tau_2,\cdots,\tau_n)$ in $\mathbb{N}^n$ differs only at the subscripts $\Omega=\{n_1,n_2,\cdots,n_{t(n)}\}, 1\leq n_1<n_2<\cdots<n_{t(n)}\leq n$,  we mean       
\begin{center}
$\sigma_j=\tau_j  \mbox{ if }  j\notin\Omega$ and $\sigma_j\neq\tau_j  \mbox{ if }  j\in\Omega$
\end{center}
for $1\leq j\leq n$, $t(n)=\#\Omega$. Define the convergents, numerators and denominators of the finite $n$-sequences inductively as the case of infinite sequences at the beginning of Section \ref{section4}.
\begin{lemma}\label{lemma2}
Compare the denominators of the convergents of the two $n$-sequences $\vec{\sigma}_n$ and $\vec{\tau}_n$ differing only at the subscripts $\Omega$, we have 
\begin{center}
$\cfrac{\sigma_{n_1}\sigma_{n_2}\cdots\sigma_{n_{t(n)}}}{(\tau_{n_1}+1)(\tau_{n_2}+1)\cdots(\tau_{n_{t(n)}}+1)}<\cfrac{q_n(\vec{\sigma}_n)}{q_n(\vec{\tau}_n)}<\cfrac{(\sigma_{n_1}+1)(\sigma_{n_2}+1)\cdots(\sigma_{n_{t(n)}}+1)}{\tau_{n_1}\tau_{n_2}\cdots\tau_{n_{t(n)}}}$.
\end{center}

\end{lemma}
\begin{proof}
Let $q_j(\vec{\sigma}_n)=q_j(\sigma_1,\cdots,\sigma_j)$ and $q_j(\vec{\tau}_n)=q_j(\tau_1,\cdots,\tau_j)$ for $1\leq j\leq n$. We only show 
\begin{equation}\label{equation7}
\cfrac{q_n(\vec{\sigma}_n)}{q_n(\vec{\tau}_n)}<\cfrac{(\sigma_{n_1}+1)(\sigma_{n_2}+1)\cdots(\sigma_{n_{t(n)}}+1)}{\tau_{n_1}\tau_{n_2}\cdots\tau_{n_{t(n)}}},
\end{equation}
the other inequality can be shown similarly. First, we have 
\begin{center}
$q_j(\vec{\sigma}_n)=q_j(\vec{\tau}_n)$ 
\end{center}
for $-1\leq j<n_1$ as $\sigma_j=\tau_j$ for $-1\leq j<n_1$. When $j=n_1$, 
\begin{center}
$\cfrac{q_{n_1}(\vec{\sigma}_n)}{q_{n_1}(\vec{\tau}_n)}=\cfrac{\sigma_{n_1}q_{n_1-1}(\vec{\sigma}_n)+q_{n_1-2}(\vec{\sigma}_n)}{\tau_{n_1}q_{n_1-1}(\vec{\tau}_n)+q_{n_1-2}(\vec{\tau}_n)}<\cfrac{(\sigma_{n_1}+1)q_{n_1-1}(\vec{\sigma}_n)}{\tau_{n_1}q_{n_1-1}(\vec{\tau}_n)}=\cfrac{\sigma_{n_1}+1}{\tau_{n_1}}$.
\end{center}
When $j=n_1+1$,

$\begin{array}{llllll}
\ & \cfrac{q_{n_1+1}(\vec{\sigma}_n)}{q_{n_1+1}(\vec{\tau}_n)}-\cfrac{\sigma_{n_1}+1}{\tau_{n_1}}\\
=&\cfrac{\sigma_{n_1+1}q_{n_1}(\vec{\sigma}_n)+q_{n_1-1}(\vec{\sigma}_n)}{\tau_{n_1+1}q_{n_1}(\vec{\tau}_n)+q_{n_1-1}(\vec{\tau}_n)}-\cfrac{\sigma_{n_1}+1}{\tau_{n_1}}\\
=&\cfrac{\sigma_{n_1+1}\tau_{n_1}q_{n_1}(\vec{\sigma}_n)+q_{n_1-1}(\vec{\sigma}_n)\tau_{n_1}-(\sigma_{n_1}+1)(\tau_{n_1+1}q_{n_1}(\vec{\tau}_n)+q_{n_1-1}(\vec{\tau}_n))}{\tau_{n_1}(\tau_{n_1+1}q_{n_1}(\vec{\tau}_n)+q_{n_1-1}(\vec{\tau}_n))}\\
<&\cfrac{\sigma_{n_1+1}(\sigma_{n_1}+1)q_{n_1}(\vec{\tau}_n)+q_{n_1-1}(\vec{\sigma}_n)\tau_{n_1}-(\sigma_{n_1}+1)(\tau_{n_1+1}q_{n_1}(\vec{\tau}_n)+q_{n_1-1}(\vec{\tau}_n))}{\tau_{n_1}(\tau_{n_1+1}q_{n_1}(\vec{\tau}_n)+q_{n_1-1}(\vec{\tau}_n))}\\
<&\cfrac{(\sigma_{n_1}+1)(\sigma_{n_1+1}q_{n_1}(\vec{\tau}_n)-\tau_{n_1+1}q_{n_1}(\tau)-q_{n_1-1}(\vec{\tau}_n))}{\tau_{n_1}(\tau_{n_1+1}q_{n_1}(\vec{\tau}_n)+q_{n_1-1}(\vec{\tau}_n))}\\
=&\cfrac{-(\sigma_{n_1}+1)q_{n_1}(\vec{\tau}_n)}{\tau_{n_1}(\tau_{n_1+1}q_{n_1}(\vec{\tau}_n)+q_{n_1-1}(\vec{\tau}_n))}\\
<& 0.\\
\end{array}$

So 
\begin{center}
$\cfrac{q_{n_1+1}(\vec{\sigma}_n)}{q_{n_1+1}(\vec{\tau}_n)}<\cfrac{\sigma_{n_1}+1}{\tau_{n_1}}$.
\end{center}
Inductively one can show 
\begin{center}
$\cfrac{q_{j}(\vec{\sigma}_n)}{q_{j}(\vec{\tau}_n)}<\cfrac{\sigma_{n_1}+1}{\tau_{n_1}}$
\end{center}
for any $n_1\leq j<n_2$. When $j=n_2$,
\begin{center}
$\cfrac{q_{n_2}(\vec{\sigma}_n)}{q_{n_2}(\vec{\tau}_n)}=\cfrac{\sigma_{n_2}q_{n_2-1}(\vec{\sigma}_n)+q_{n_2-2}(\vec{\sigma}_n)}{\tau_{n_2}q_{n_2-1}(\vec{\tau}_n)+q_{n_2-2}(\vec{\tau}_n)}
<\cfrac{(\sigma_{n_2}+1)q_{n_2-1}(\vec{\sigma}_n)}{\tau_{n_2}q_{n_2-1}(\vec{\tau}_n)}
<\cfrac{(\sigma_{n_2}+1)(\sigma_{n_1}+1)}{\tau_{n_2}\tau_{n_1}}$.
\end{center}
By similar inductive steps as before we can show
\begin{center}
$\cfrac{q_{j}(\vec{\sigma}_n)}{q_{j}(\vec{\tau}_n)}<\cfrac{(\sigma_{n_2}+1)(\sigma_{n_1}+1)}{\tau_{n_2}\tau_{n_1}}$
\end{center}
for any $n_2\leq j<n_3$. The inequality \ref{equation7} holds by inductive processes on $n_l$ for $1\leq l\leq t(n)$. 
\end{proof}

Now we aim to compare lengths of the two rank-$n$ basic intervals $I_n(\vec{\sigma}_n)$ and $I_n(\vec{\tau}_n)$.
\begin{Comparison Lemma}
Suppose that for $n$ large enough
\begin{equation}\label{equation8}
\max\{\sigma_{n_1}+1,\sigma_{n_2}+1,\cdots,\sigma_{n_{t(n)}}+1,\tau_{n_1}+1,\tau_{n_2}+1,\cdots,\tau_{n_{t(n)}}+1\}\leq\psi(n)
\end{equation}
 for some $\psi(n)$. For any $\epsilon>0$, if $2^{(n-1)\epsilon}\geq 2\psi^{2t(n)}(n)$ for $n$ large enough, then
\begin{equation}\label{equation11}
|I_n(\vec{\tau}_n)|^{1+\epsilon}\leq |I_n(\vec{\sigma}_n)|\leq |I_n(\vec{\tau}_n)|^{1-\epsilon}
\end{equation}
for $n$ large enough.
\end{Comparison Lemma}
\begin{proof}
First, by Lemma \ref{lemma2},
\begin{center}
$|I_n(\vec{\sigma}_n)|\geq\cfrac{1}{2q_n^2(\vec{\sigma}_n)}\geq\cfrac{1}{2\big(\frac{(\sigma_{n_1}+1)\cdots(\sigma_{n_{t(n)}}+1)}{\tau_{n_1}\cdots\tau_{n_{t(n)}}}q_n(\vec{\tau}_n)\big)^2}$.
\end{center}
Now by \ref{equation8},
\begin{center}
$q_n(\vec{\tau}_n)^{2\epsilon}\geq 2^{(n-1)\epsilon}\geq 2\psi^{2t(n)}(n)\geq2\big(\frac{(\sigma_{n_1}+1)\cdots(\sigma_{n_{t(n)}}+1)}{\tau_{n_1}\cdots\tau_{n_{t(n)}}}\big)^2$,
\end{center}
so
\begin{equation}\label{equation9}
|I_n(\vec{\sigma}_n)|\geq \frac{1}{q_n^{2+2\epsilon}(\vec{\tau}_n)}\geq |I_n(\vec{\tau}_n)|^{1+\epsilon}
\end{equation}
By interchanging $\vec{\sigma}_n$ and $\vec{\tau}_n$, repeat the former process, we get $|I_n(\vec{\tau}_n)|\geq |I_n(\vec{\sigma}_n)|^{1+\epsilon}$, so
\begin{equation}\label{equation10}
|I_n(\vec{\sigma}_n)|\leq |I_n(\vec{\tau}_n)|^{1/{(1+\epsilon)}}\leq |I_n(\vec{\tau}_n)|^{1-\epsilon}.
\end{equation}
\ref{equation9} and \ref{equation10} justify \ref{equation11}.

\end{proof}
\begin{remark}
It is easy to see that, the lemma holds as long as we assume 
\begin{center}
$2^{(n-1)\epsilon}\geq 2\big(\frac{(\sigma_{n_1}+1)\cdots(\sigma_{n_{t(n)}}+1)}{\tau_{n_1}\cdots\tau_{n_{t(n)}}}\big)^2$ and $2^{(n-1)\epsilon}\geq2\big(\frac{(\tau_{n_1}+1)\cdots(\tau_{n_{t(n)}}+1)}{\sigma_{n_1}\cdots\sigma_{n_{t(n)}}}\big)^2$
\end{center}
for any $\epsilon>0$. Each of the two assumed inequality guarantees one of the two inequalities in \ref{equation11}. Moreover, the result can also be sharpened when considering sets of reals with large $q_n(\vec{\tau}_n)$ and/or $q_n(\vec{\sigma}_n)$ uniformly.   These give more general or sharper results, however, the one now is enough for our use in this work. Compare the lemma with \cite[Lemma 3.3]{WX1}.

\end{remark}

As a first application of the lemma, we show that
\begin{corollary}
For the sets 
\begin{center}
$E_M(\alpha):=\{x\in[0,1): a_{l^2}(x)=[4\alpha l\log l] \mbox{ for all } l\geq 2 \mbox{ and }1\leq a_i(x)\leq M \mbox{ for } i\neq l^2 \mbox{ for any } l\geq 1\}$,
\end{center}
$E_M:=\{x\in[0,1): 1\leq a_i(x)\leq M \mbox{ for any } i\geq 1\}$ in \cite{WX1} and  $H_M$ defined in the last section, we have
\begin{center}
$dim_H E_M(\alpha)=dim_H E_M=dim_H H_M$
\end{center}  
\end{corollary}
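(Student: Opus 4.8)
The plan is to prove the two equalities $dim_H E_M(\alpha)=dim_H E_M$ and $dim_H H_M=dim_H E_M$ separately by one and the same argument, the Corollary then following by transitivity; I describe it for $E_M(\alpha)$. By Good's Corollary we may harmlessly assume the unconstrained first partial quotient also ranges over $\{1,\dots,M\}$, so that points of $E_M(\alpha)$ are faithfully coded by sequences $(b_1,b_2,\dots)\in\{1,\dots,M\}^{\mathbb{N}}$: the $j$-th position not of the form $l^2$ carries $b_j$, and position $l^2$ carries the forced value $c_l:=[4\alpha l\log l]$. This sets up a cylinder-respecting bijection $\Phi$ between $E_M$ with its usual coding and $E_M(\alpha)$: fixing $b_1,\dots,b_n$ determines the rank-$n$ basic interval $I_n(b_1,\dots,b_n)$ on one side and the rank-$\phi(n)$ basic interval $I_{\phi(n)}\big(\Phi(b_1,\dots,b_n)\big)$ on the other, where $\phi(n)=n+\#\{l\ge 2:l^2\le\phi(n)\}=n+O(\sqrt n)$ and $\Phi(b_1,\dots,b_n)$ is the finite string with the $c_l$'s spliced in; the distinct cylinders of $E_M(\alpha)$ are precisely these $I_{\phi(n)}(\Phi(b_1,\dots,b_n))$, in bijection with the cylinders of $E_M$.

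The core of the argument is a uniform length comparison: for every $\epsilon>0$ there is $N_0$ so that for all $n\ge N_0$ and all admissible $(b_1,\dots,b_n)$,
\[
|I_n(b_1,\dots,b_n)|^{1+\epsilon}\ \le\ \big|I_{\phi(n)}\big(\Phi(b_1,\dots,b_n)\big)\big|\ \le\ |I_n(b_1,\dots,b_n)|^{1-\epsilon}.
\]
The upper bound is immediate from $\frac1{2q^2}<|I_m|<\frac1{q^2}$ together with $q_{\phi(n)}(\Phi(\cdot))\ge q_n(\cdot)$ (inserting partial quotients does not decrease the continuant), which gives $|I_{\phi(n)}(\Phi(\cdot))|<2|I_n(\cdot)|\le|I_n(\cdot)|^{1-\epsilon}$ once $n$ is large. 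For the lower bound, by standard continuant estimates (see \cite[Lemma 2.1]{Wu}) one has $q_{\phi(n)}(\Phi(b_1,\dots,b_n))\le q_n(b_1,\dots,b_n)\cdot(\max_{l^2\le\phi(n)}c_l+1)^{t(n)}$ with $t(n)=O(\sqrt n)$; since $c_l\le 4\alpha\sqrt{\phi(n)}\log\sqrt{\phi(n)}$ this correction factor is $e^{O(\sqrt n\log n)}$, i.e. sub-exponential, whereas $|I_n(b_1,\dots,b_n)|\ge(2(M+1)^{2n})^{-1}$ decays only exponentially, so the correction is absorbed into the exponent $1+\epsilon$ for $n$ large. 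This is exactly the mechanism of the Comparison Lemma, its hypothesis $2^{(n-1)\epsilon}\ge2\psi^{2t(n)}(n)$ being precisely the displayed sub-exponentiality. For $H_M$ the inserted values are $\le e^{n^\beta}$ and $t(n)=O(n^{1/N})$, so the correction factor is $e^{O(n^{\beta+1/N})}$, sub-exponential precisely because $\beta<1$ and $N$ is large — which is why in that case the lower bound is literally the Lemma already proved in Section \ref{section2}.

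With the comparison in hand, both $dim_H E_M(\alpha)\le dim_H E_M$ and $dim_H E_M\le dim_H E_M(\alpha)$ follow by transporting fundamental (cylinder) coverings, which suffice for computing Hausdorff dimension in continued fractions. If $s>dim_H E_M$ then $\mathcal{H}^s(E_M)=0$, so $E_M$ admits cylinder coverings of arbitrarily small $s$-sum and small mesh; the corresponding cylinders $I_{\phi(n)}(\Phi(\cdot))$ cover $E_M(\alpha)$ with no larger $s$-sum (as $|I_{\phi(n)}(\Phi(\cdot))|\le 2|I_n(\cdot)|$, a harmless factor $2^s$), whence $\mathcal{H}^s(E_M(\alpha))=0$; letting $s\downarrow dim_H E_M$ gives the first inequality. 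For the second, fix $\epsilon>0$ and $s$ with $s(1-\epsilon)>dim_H E_M(\alpha)$; then $E_M(\alpha)$ admits cylinder coverings of arbitrarily small $s(1-\epsilon)$-sum and small mesh, and the corresponding cylinders $I_n(\cdot)$ cover $E_M$ with $s$-sum no larger, since $|I_n(\cdot)|\le|I_{\phi(n)}(\Phi(\cdot))|^{1-\epsilon}$ by the displayed comparison; hence $\mathcal{H}^s(E_M)=0$, and letting $s\downarrow dim_H E_M(\alpha)/(1-\epsilon)$ and then $\epsilon\downarrow0$ gives the second inequality. Running the same two steps with the analogous bijection onto $H_M$ completes the proof.

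The step I expect to be the main obstacle is the uniform length comparison: one must control, uniformly over all admissible prefixes, the multiplicative error incurred by splicing in the sparse block of large partial quotients and verify it is sub-exponential in the depth, so that raising to the power $1\pm\epsilon$ dissolves it; this is exactly where the specific growth of the inserted values ($l\log l$ for $E_M(\alpha)$, and $\beta<1$ with $N$ large for $H_M$) is used. The rest is bookkeeping: Good's Corollary to normalise the first coordinate, and the routine passage between cylinder coverings on the two sides.
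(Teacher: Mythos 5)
Your proof is correct, but it takes a genuinely different route from the paper's. The paper does not compare $E_M(\alpha)$ with $E_M$ directly. Instead, it introduces the intermediate set $E_M'\subset E_M$ in which the squares positions are all forced to equal $1$, so that $E_M(\alpha)$ and $E_M'$ have cylinders of the \emph{same} rank $m_j$ that differ only on the subscripts $\{l^2: l\ge 2\}$; the Comparison Lemma, which is stated precisely for two $n$-sequences of the same length differing on a sparse set of positions, then applies verbatim to transport coverings and gives $\dim_H E_M(\alpha)=\dim_H E_M'$. Since $E_M'\subset E_M$ this yields only $\dim_H E_M(\alpha)\le\dim_H E_M$; the reverse inequality is imported from \cite[Proof of Theorem 1.4]{WX1} in the form $\dim_H E_M(\alpha)\ge\frac{1}{1+\epsilon}\dim_H E_M$ for all $\epsilon>0$. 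You instead build a bijection between $E_M$ and $E_M(\alpha)$ that \emph{inserts} the forced quotients, so the two sides' cylinders have different ranks ($n$ versus $\phi(n)=n+O(\sqrt n)$); the length comparison you need is therefore not the paper's Comparison Lemma as stated, but the insertion estimate in the style of \cite[Lemma 2.1]{Wu} and of the Lemma in Section \ref{section2} (which is what you invoke). The advantage of your route is that it is self-contained and delivers both inequalities from the covering transport, without appealing to the external lower bound from \cite{WX1}. The paper's route, in exchange for that dependence, uses the Comparison Lemma it has just proved and thereby exhibits the lemma's intended use. Both arguments are sound; just be aware that the hypothesis ``$2^{(n-1)\epsilon}\ge 2\psi^{2t(n)}(n)$'' of the Comparison Lemma is not literally what you verify, since your two cylinders do not share a rank; what you verify is the cognate sub-exponentiality condition controlling the continuant blow-up under insertion, which is the mechanism but not the statement of that lemma.
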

\begin{proof}
We only show the first half of the equality, the second half is similar. It is shown in \cite[Proof of theorem 1.4]{WX1} that $dim_H E_M(\alpha)\geq \frac{1}{1+\epsilon} dim_H E_M$ for any $\epsilon>0$. Let $n_l=l^2$ for $l\in\mathbb{N}$. Consider the subset of the set $E_M$,
\begin{center}
$E_M':=\{x\in[0,1): a_{l^2}(x)=1 \mbox{ for all } l\geq 2 \mbox{ and }1\leq a_i(x)\leq M \mbox{ for } i\neq l^2 \mbox{ for any } l\geq 1\}$.
\end{center} 
We will show that $dim_H E_M(\alpha)=dim_H E_M'$, which is enough to imply $dim_H E_M(\alpha)=dim_H E_M$. Obviously there is an $1-1$ correspondence between numbers in  $E_M(\alpha)$ and $E_M'$. We will show this is also the case for their fundamental coverings. For a $\delta$-covering of countable fundamental intervals $\{I_{m_j}(\sigma_1,\sigma_2,\cdots,\sigma_{m_j})\}_{j=1}^\infty$ of the set $E_M(\alpha)$, let $t(m_j):=\#\{l\in\mathbb{N}: l^2\leq m_j\}$. Obviously $t(m_j)\leq m_j^{1/2}$ for $m_j$ large enough.  Then the set of fundamental intervals
\begin{center}
$\{I_{m_j}(\tau_1,\tau_2,\cdots,\tau_{m_j}): \tau_i=\sigma_i \mbox{ if } i\neq l^2 \mbox{ for any }1\leq l\leq t(m_j) \mbox{ and } 1\leq i\leq m_j, \tau_i=1 \mbox{ if } i= l^2 \mbox{ for some }1\leq l\leq t(m_j)\}_{j=1}^\infty$
\end{center}
is a $\delta'$-covering of  $E_M'$. If $\delta$ is small enough, we can guarantee the infimum rank $\inf\{m_j\}_{j=1}^\infty$ is large enough and $\delta'$ is small enough. $(\sigma_1,\sigma_2,\cdots,\sigma_{m_j})$ and $(\tau_1,\tau_2,\cdots,\tau_{m_j})$ differs only at the subscripts $\Omega=\{n_l\}_{l=1}^{t(m_j)}$. Moreover,
\begin{center}
$\max\{\sigma_{n_1}+1,\cdots,\sigma_{n_{t(m_j)}}+1,\tau_{n_1}+1,\cdots,\tau_{n_{t(m_j)}}+1\}\leq m_j$
\end{center}
for any $m_j$. For any $\epsilon>0$, we have
\begin{center}
$2^{(m_j-1)\epsilon}\geq 2m_j^{2m_j^{1/2}}\geq 2m_j^{2t(m_j)}$.
\end{center}
By the Comparison Lemma, 
\begin{center}
$|I_n(\tau_1,\tau_2,\cdots,\tau_{m_j})|^{1+\epsilon}\leq |I_n(\sigma_1,\sigma_2,\cdots,\sigma_{m_j})|\leq |I_n(\tau_1,\tau_2,\cdots,\tau_{m_j})|^{1-\epsilon}$.
\end{center}
It is easy to see that every $\delta'$-covering of $E_M'$ of fundamental intervals $\{I_{m_j}(\tau_1,\tau_2,\cdots,\tau_{m_j})\}_{j=1}^\infty$ can be obtained in such a way through a $\delta$-covering of $E_M(\alpha)$. So we have
\begin{center}
$dim_H E_M(\alpha)=dim_H E_M'$.
\end{center}

\end{proof}

By Proposition \ref{proposition3} and the Comparison Lemma, we can demonstrate that
\begin{proposition}\label{proposition2} 
For any $0\leq r<1$, there exists a non-decreasing sequence $\{\varphi(n)\}_{n=1}^\infty$ with $\liminf_{n\rightarrow\infty}\frac{\varphi(n)}{n}=\alpha_r+e^{-1}$ and $\limsup_{n\rightarrow\infty}\frac{\varphi(n)}{n}=\infty$, such that
\begin{center}
$dim_H E_{\varphi(n)}\geq r$.
\end{center}
\end{proposition}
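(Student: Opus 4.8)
The plan is to construct $\varphi(n)$ explicitly by interpolating between two growth regimes along a rapidly increasing sequence of indices, so that the ``slow'' part controls $\liminf \varphi(n)/n$ and forces $E_{\varphi(n)}$ to contain a copy of a set of dimension essentially $r$, while the ``fast'' part (a sparse sequence of large partial quotients) pushes $\limsup \varphi(n)/n$ to $\infty$ without destroying the dimension. Concretely, fix the value $\alpha_r$ supplied by IJ's Proposition, so that $dim_H E_{n\alpha_r}=r$; also fix a reference point $x_0\in E_{n\alpha_r}$ (or rather a whole Cantor-type subset of $E_{n\alpha_r}$ of dimension $r$, obtained from the thermodynamic construction behind IJ's Proposition). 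Choose a very sparse index set $\{n_k\}$, e.g. $n_k=k^2$ or $n_k=2^k$, and set $\varphi(n_k)$ to be large, say $\varphi(n_k)=n_k^2$ (or $e^{\sqrt{n_k}}$), while between consecutive $n_k$'s let $\varphi$ grow linearly with slope $\alpha_r+e^{-1}$, i.e. $\varphi(n)=\alpha_r(n)+e^{-1}n+(\text{correction})$ adjusted so that $\varphi$ is non-decreasing and $\liminf \varphi(n)/n=\alpha_r+e^{-1}$ is attained along $n=n_{k+1}-1$. The term $e^{-1}$ is exactly the amount contributed by a single partial quotient equal to $1$ on average in the logarithmic sense, matching the shift needed for the Comparison Lemma; this is why it appears in the statement.

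The core of the argument is to show $dim_H E_{\varphi(n)}\ge r$ by exhibiting a subset. Take the dimension-$r$ Cantor set $K\subset E_{n\alpha_r}$ and modify each $x\in K$ at the sparse positions $n_k$: replace $a_{n_k}(x)$ by the integer $\bigl[\varphi(n_k)-\varphi(n_{k-1})\bigr]$ (this is the same device as in Section \ref{section2} and in the corollary just proved). Call the resulting set $\widetilde K$. First I would check $\widetilde K\subset E_{\varphi(n)}$: at $n=n_k$ the partial sums telescope to $s_{n_k}\approx \varphi(n_k)$, and between the $n_k$'s the linear growth of $\varphi$ with the correct slope, together with $s_n(x)\sim n\alpha_r$ for $x\in K$, keeps $s_n/\varphi(n)\to 1$. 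The sparse large terms contribute a density $t(n)\le n^{1/2}$ of altered coordinates, and their sizes are at most $\psi(n)=\varphi(n_k)\le n^2$; since $2^{(n-1)\epsilon}\ge 2\psi^{2t(n)}(n)$ holds for $n$ large (because $n^{2\cdot n^{1/2}}$ is sub-exponential), the Comparison Lemma applies and gives $|I_n(\widetilde{\vec a}\,)| \ge |I_n(\vec a)|^{1+\epsilon}$ for the corresponding basic intervals, uniformly. A standard mass-distribution / covering argument (transport the natural measure on $K$ to $\widetilde K$, or compare fundamental coverings exactly as in the corollary) then yields $dim_H \widetilde K\ge \frac{1}{1+\epsilon}\,dim_H K=\frac{r}{1+\epsilon}$, and letting $\epsilon\to 0$ gives $dim_H E_{\varphi(n)}\ge dim_H\widetilde K\ge r$. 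Finally $\liminf\varphi(n)/n=\alpha_r+e^{-1}$ and $\limsup\varphi(n)/n=\infty$ are immediate from the construction, the latter because $\varphi(n_k)/n_k=n_k\to\infty$.

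The main obstacle I expect is the bookkeeping that simultaneously forces $\liminf\varphi(n)/n$ to equal the prescribed value $\alpha_r+e^{-1}$ \emph{exactly} (not merely $\le$ or $\ge$) while keeping $\varphi$ non-decreasing and keeping $s_n/\varphi(n)\to1$ on $\widetilde K$ through \emph{every} $n$, including the indices $n_k$ where a single huge jump occurs: one must verify that the jump $\varphi(n_k)-\varphi(n_k-1)$ is absorbed by the single altered partial quotient $a_{n_k}$ and does not spoil the limit at nearby $n$. Controlling this is delicate because $\varphi(n_k)$ is much larger than $\varphi(n_k-1)$, so the ratio $s_n/\varphi(n)$ could oscillate near $n_k$ unless the $n_k$ are chosen sparse enough that $\varphi(n_{k-1})=o(\varphi(n_k))$ and the linear ramp between them starts from a value negligible compared to $\varphi(n_k)$. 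Once the sparsity is chosen correctly this is routine, so the genuinely new content is just the verification that the Comparison Lemma hypothesis $2^{(n-1)\epsilon}\ge 2\psi^{2t(n)}(n)$ survives with $\psi(n)$ as large as $n^2$ (or even $e^{\sqrt n}$) — which it does, and which is precisely the point of having stated the Comparison Lemma in that quantitative form.
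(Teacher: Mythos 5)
Your overall strategy is the same as the paper's: take the dimension-$r$ set $E_{n\alpha_r}$, modify each $x$ in it at a sparse sequence of indices $\{n_l\}$ by inserting a large partial quotient, invoke the Comparison Lemma (with $t(n)\le n^{1/2}$ and a polynomial bound on $\psi(n)$) to see that the modified set $E_{n\alpha_r}'$ retains dimension $r$, and then exhibit a $\varphi$ for which $E_{n\alpha_r}'\subset E_{\varphi(n)}$. So far so good.

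But there is a genuine gap in the concrete construction, and your heuristic for the $e^{-1}$ is wrong. The paper takes $n_l=l^l$, replaces $a_{l^l}$ by $a_{l^l}+l^{l+1}-(l-1)^l$ (so the perturbations telescope), and sets $\varphi_r(n)=n\alpha_r+(l-1)^l$ for $(l-1)^{l-1}\le n<l^l$. The constant $e^{-1}$ is then a pure arithmetic artifact of this choice: $\frac{\varphi_r(n)}{n}=\alpha_r+\frac{(l-1)^l}{n}$, which is minimized near $n=l^l$, and $\lim_{l\to\infty}\frac{(l-1)^l}{l^l}=e^{-1}$. It has nothing to do with ``a single partial quotient equal to $1$ on average in the logarithmic sense'' or with the Comparison Lemma; it is just $(1-1/l)^l\to e^{-1}$. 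More importantly, the sparse sequences you propose ($n_k=k^2$ or $n_k=2^k$ with $\varphi(n_k)=n_k^2$) do not work: in both cases the gap $n_{k+1}-n_k$ is far too short relative to $\varphi(n_k)$, so even at $n=n_{k+1}-1$ one has $\varphi(n)/n\gtrsim\varphi(n_k)/n_{k+1}\to\infty$, giving $\liminf\varphi(n)/n=\infty$ rather than $\alpha_r+e^{-1}$. One needs super-exponential spacing ($n_l=l^l$) precisely so that $\varphi(n_l)$ is of the same order as $\alpha_r n_{l+1}$ and the ratio $\frac{(l-1)^l}{l^l}$ has the finite limit $e^{-1}$. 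You flagged this bookkeeping as ``the main obstacle'' and then called it routine; in fact it is the whole content of the proposition — the statement records the value $\alpha_r+e^{-1}$ that this specific construction produces, and without nailing the sequence you have not proved anything with that value of the $\liminf$.
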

\begin{proof}
By Proposition \ref{proposition3}, $dim_H E_{n,\alpha_r}=r$ for any $0\leq r<1$. Now consider the following set
\begin{center}
$E_{n,\alpha_r}':=\{x\in(0,1): a_n(x)=a_n(y) \mbox{ if } n\neq l^l, a_{l^l}(x)=a_{l^l}(y)+l^{l+1}-(l-1)^l \mbox{ for any } l\in\mathbb{N} \mbox{ and some }y\in E_{n,\alpha_r}\}$.
\end{center}
There is an $1-1$ mapping between numbers in  $E_{n,\alpha_r}$ and $E_{n,\alpha_r}'$. Now suppose the set $E_{n,\alpha_r}$ has a $\delta$-covering of countable fundamental intervals $\{I_{m_j}(\sigma_1,\sigma_2,\cdots,\sigma_{m_j})\}_{j=1}^\infty$. Let $t(m_j):=\#\{l\in\mathbb{N}: l^l\leq m_j\}$, obviously $t(m_j)\leq m_j^{1/2}$ for $m_j$ large enough.  Then the sets of fundamental intervals
\begin{center}
$\{I_{m_j}(\tau_1,\tau_2,\cdots,\tau_{m_j}): \tau_i=\sigma_i \mbox{ if } i\neq l^l \mbox{ for any }1\leq l\leq t(m_j) \mbox{ and } 1\leq i\leq m_j, \tau_i=\sigma_i+l^{l+1}-(l-1)^l \mbox{ if } i= l^l \mbox{ for some }1\leq l\leq t(m_j)\}_{j=1}^\infty$
\end{center}
is a $\delta'$-covering of  $E_{n,\alpha_r}'$. If $\delta$ is small enough, we can guarantee the infimum rank $\inf\{m_j\}_{j=1}^\infty$ is large enough and $\delta'$ is small enough. As $\{I_{m_j}(\sigma_1,\sigma_2,\cdots,\sigma_{m_j})\}_{j=1}^\infty$ covers $E_{n,\alpha_r}$, we have $\sum_{i=1}^{m_j}\sigma_i\sim m_j\alpha_r$ (means $1-\epsilon<\frac{\sum_{i=1}^{m_j}\sigma_i}{m_j\alpha_r}<1+\epsilon$). Now compare $|I_{m_j}(\sigma_1,\sigma_2,\cdots,\sigma_{m_j})|$ and $|I_{m_j}(\tau_1,\tau_2,\cdots,\tau_{m_j})|$. In notations of the Comparsion Lemma, let $n_l=l^l$ for any $1\leq l\leq t(m_j)$. Note that $(\sigma_1,\sigma_2,\cdots,\sigma_{m_j})$ and $(\tau_1,\tau_2,\cdots,\tau_{m_j})$ differs only at the subscripts $\Omega=\{n_l\}_{l=1}^{t(m_j)}$. Moreover,
\begin{center}
$\max\{\sigma_{n_1}+1,\cdots,\sigma_{n_{t(m_j)}}+1,\tau_{n_1}+1,\cdots,\tau_{n_{t(m_j)}}+1\}\leq 2m_j\alpha_r+m_j^2< m_j^3$
\end{center}
for $m_j$ large enough. For any $\epsilon>0$ and $m_j$ large enough, we have
\begin{center}
$2^{(m_j-1)\epsilon}\geq 2m_j^{6m_j^{1/2}}\geq 2m_j^{6t(m_j)}$.
\end{center}
By the Comparison Lemma, 
\begin{center}
$|I_n(\tau_1,\tau_2,\cdots,\tau_{m_j})|^{1+\epsilon}\leq |I_n(\sigma_1,\sigma_2,\cdots,\sigma_{m_j})|\leq |I_n(\tau_1,\tau_2,\cdots,\tau_{m_j})|^{1-\epsilon}$.
\end{center}
It is easy to see that every $\delta'$-covering of $E_{n,\alpha_r}'$ of fundamental intervals 
\begin{center}
$\{I_{m_j}(\tau_1,\tau_2,\cdots,\tau_{m_j})\}_{j=1}^\infty$
\end{center}
can be obtained in such a way through a $\delta$-covering of $E_{n,\alpha_r}$. So we have
\begin{center}
$dim_H E_{n,\alpha_r}=dim_H E_{n,\alpha_r}'=r$.
\end{center}
Now let 
\begin{center}
$\varphi_r(n)=n\alpha_r+(l-1)^l$ for $(l-1)^{l-1}\leq n< l^l$, $l\geq 2$. 
\end{center}
It is easy to check that 
\begin{center}
$\liminf_{n\rightarrow\infty}\frac{\varphi_r(n)}{n}=\alpha_r+e^{-1}$, $\limsup_{n\rightarrow\infty}\frac{\varphi_r(n)}{n}=\infty$
\end{center}
and $E_{n,\alpha_r}'\subset E_{\varphi_r(n)}$, so $dim_H E_{\varphi_r(n)}\geq r$, which justifies the proposition.

\end{proof}
It would be an interesting question to ask whether $dim_H E_{\varphi_r(n)}= r$ or  $dim_H E_{\varphi_r(n)}> r$ for the sequence $\varphi_r(n)$ defined above. Theorem \ref{Theorem4} follows directly from Proposition \ref{proposition2}. 

Now we deal with bound on the lower growth rate for sequences $\{\varphi(n)\}_{n=1}^\infty$ with full dimensional level sets $E_{\varphi(n),\alpha}$. We first show a pioneer bound on the lower growth rate.
\begin{lemma}\label{lemma1}
For a non-decreasing sequence $\varphi(n), n\in\mathbb{N}$, if $dim_H E_{\varphi(n)}=1$, then 
\begin{center}
$\limsup_{n\rightarrow\infty}\frac{\varphi(n)}{n}=\infty$.
\end{center}
\end{lemma}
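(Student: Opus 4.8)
The plan is to argue by contraposition: assume $\limsup_{n\to\infty}\frac{\varphi(n)}{n}<\infty$, so there is a constant $M>0$ with $\varphi(n)\le Mn$ for all large $n$, and show that $dim_H E_{\varphi(n)}<1$. The key observation is that for $x\in E_{\varphi(n)}$ we have $s_n(x)\sim\varphi(n)\le Mn$ eventually, so in particular $\limsup_{n\to\infty}\frac{s_n(x)}{n}\le M$ (indeed the limit $\lim_n s_n(x)/n$ exists and is at most $M$ for such $x$, up to a negligible additive slack). This places $E_{\varphi(n)}$, after discarding a finite initial segment of partial quotients, inside a set of the form handled by the \emph{CV's Theorem} / \emph{IJ's Proposition}, namely essentially $\{x:\limsup_n s_n(x)/n\le M'\}$ for some finite $M'$, whose Hausdorff dimension is strictly less than $1$.

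More carefully, the first step is to reduce from the $\lim$-condition to a clean "for all $n$" inequality. Given $x\in E_{\varphi(n)}$, for any $\varepsilon>0$ there is $N$ with $s_n(x)\le(1+\varepsilon)\varphi(n)\le (1+\varepsilon)Mn$ for all $n\ge N$. Hence $E_{\varphi(n)}\subseteq\bigcup_{N\ge1}\{x: s_n(x)\le 2Mn\text{ for all }n\ge N\}$ (taking $\varepsilon=1$, say). By \emph{Good's Corollary}, changing finitely many partial quotients does not change the dimension, so each set in this countable union has the same dimension as $\{x: s_n(x)\le 2Mn\text{ for all }n\in\mathbb{N}\}=F_{2M}$ in the notation of the excerpt (possibly after a harmless rescaling of the constant to absorb the first few quotients). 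Since Hausdorff dimension is stable under countable unions, $dim_H E_{\varphi(n)}\le dim_H F_{2M}$.

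The second step is to invoke \emph{CV's Theorem}, which gives $dim_H F_{\alpha}<1$ for every finite $\alpha\in(0,\infty)$; with $\alpha=2M$ this yields $dim_H E_{\varphi(n)}\le dim_H F_{2M}<1$, contradicting the hypothesis $dim_H E_{\varphi(n)}=1$. Alternatively one can phrase the bound through \emph{IJ's Proposition}: $E_{\varphi(n)}$ is contained up to finitely many coordinates in $\{x:\lim_n s_n(x)/n\le M\}$, and the dimension of $\{x:\lim_n s_n(x)/n=\alpha\}$ stays below $1$ for every finite $\alpha$, with supremum $1$ only as $\alpha\to\infty$.

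The main obstacle is the bookkeeping in the reduction step: one must be careful that the passage from "$\lim s_n(x)/\varphi(n)=1$" to a uniform bound "$s_n(x)\le Cn$ for all $n\ge N$" only loses a finite prefix, and that absorbing this prefix via \emph{Good's Corollary} genuinely leaves the relevant comparison set of the type covered by \emph{CV's Theorem}. A minor subtlety is that \emph{CV's Theorem} is stated for $F_\alpha$ defined by the condition holding for \emph{all} $n\in\mathbb{N}$, whereas our inclusion naturally produces the condition holding for all $n\ge N$; this is precisely what \emph{Good's Corollary} is there to fix. Once that matching is set up cleanly, the strict inequality $dim_H F_{2M}<1$ does all the work and the proof closes immediately.
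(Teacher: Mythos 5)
Your argument takes essentially the same route as the paper: bound $E_{\varphi(n)}$ from above by a set of the form $F_\alpha$ and conclude via CV's Theorem that $\dim_H E_{\varphi(n)} \le \dim_H F_\alpha < 1$. The paper's own proof is, if anything, terser and slightly imprecise — it asserts $E_{\varphi(n)} \subset F_\alpha$ with $\alpha = \limsup_n \varphi(n)/n$ directly, which is not literally true, since $F_\alpha$ imposes $s_n(x)/n \le \alpha$ for \emph{every} $n$, while a point of $E_{\varphi(n)}$ can have, say, an arbitrarily large first partial quotient. You notice exactly this mismatch and handle it by writing $E_{\varphi(n)}$ inside a countable union over cut-off indices $N$ and controlling each piece. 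The only loose end in your version is the literal appeal to Good's Corollary: as stated in the paper it applies to product-type sets $\{x : a_i(x)\in\mathcal{E}_i\}$ and is about changing a finite initial alphabet, whereas your sets $\{x : s_n(x)\le 2Mn \text{ for all } n\ge N\}$ are cut out by cumulative sum constraints, so Good's Corollary does not apply verbatim. The standard repair is cheap and bypasses Good entirely: for $x$ in that set put $C = s_{N-1}(x)$; then $s_n(x) \le C \le Cn$ for $n<N$ and $s_n(x)\le 2Mn$ for $n\ge N$, so $x\in F_{2M+C}$, giving $\{x : s_n(x)\le 2Mn \text{ for all } n\ge N\}\subset \bigcup_{C\ge 1} F_{2M+C}$, a countable union of sets each of dimension $<1$ and uniformly bounded away from $1$ once $C$ is fixed — wait, the dimensions $\dim_H F_{2M+C}$ increase to $1$ as $C\to\infty$, so one must take the union over $N$ \emph{before} the union over $C$; more simply, for fixed $x\in E_{\varphi(n)}$ there is a single finite $C$ that works, so $E_{\varphi(n)}\subset\bigcup_{C\ge 1}F_{2M+C}$ and countable stability of Hausdorff dimension gives $\dim_H E_{\varphi(n)}\le\sup_C\dim_H F_{2M+C}$. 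This supremum is $\le 1$ but could equal $1$, so one still needs either the sharper statement that $\dim_H\{x:\limsup_n s_n(x)/n\le\alpha\}<1$ for finite $\alpha$ (which is the form the CV/IJ results actually supply and which the paper's introduction alludes to), or an argument that the dimensions $\dim_H F_{2M+C}$ are bounded by $\dim_H\{x:\limsup s_n/n\le 2M'\}$ for a fixed $M'$. In short: your approach is the paper's approach and your extra care about the "eventually vs.\ for all" distinction is warranted, but the cleanup should be done with the $\limsup$ form of the CV/IJ dimension bound rather than with Good's Corollary or a naive countable union over $F_{2M+C}$.
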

\begin{proof}
We show this by reduction to absurdity. Suppose that there exists a non-decreasing sequence $\varphi(n), n\in\mathbb{N}$, such that $\limsup_{n\rightarrow\infty}\frac{\varphi(n)}{n}=C<\infty$ and $dim_H E_{\varphi(n)}=1$. Then for this $\varphi(n)$, 
\begin{center}
$E_{\varphi(n)}\subset F_{C}$
\end{center}  
according to our notations before. By Theorem \ref{Theorem7}, $dim_H F_{C}<1$, so $dim_H E_{\varphi(n)}<1$. This contradicts the assumption $dim_H E_{\varphi(n)}=1$, so the conclusion follows.
\end{proof}

It is then natural to ask whether it is possible for the level sets of an irregular sequence $\{\varphi(n)\}_{n=1}^\infty$ to be of full dimension. This is denied by the first referee of the paper, together with the following smart proof. Denote by

\[
F(\alpha) = \{x\in [0,1]; \limsup \frac 1n \sum_{\ell=1}^n a_\ell(x) \leq \alpha\} 
\] and
\[
W(\alpha) = \{x\in [0,1]; \liminf \frac 1n \sum_{\ell=1}^n a_\ell(x) = \alpha\}
\] 
for any $\alpha\geq 0$.

\begin{theorem}\label{Theorem8}
For dimensions of the two sets $F(\alpha)$ and $G(\alpha)$, we have
\[
\dim_H W(\alpha) \leq \dim_H F(\alpha).
\]
\end{theorem}

\begin{proof}


Fix $\varepsilon>0$ and $n\in\mathbb{N}$. Let $\mathcal{A}_n$ be the set of all words $(a_1, \cdots, a_n)$ of length $n$ such that $\sum_{i=1}^n a_i \leq (\alpha + \varepsilon)n$. Let 
$$
A_n:=\{x\in(0,1): \sum_{i=jn}^{jn+n-1} a_i(x) \leq (\alpha + \varepsilon)n \mbox{ for any } j\in\mathbb{N}\cup\{0\}\}
$$
be the set of points $x\in [0,1]$ such that $(a_1(x), a_2(x), \cdots)$ is a concatenation of words from $\mathcal{A}_n$. We have $A_n \subset F(\alpha + \varepsilon)$, thus $\dim_H A_n \leq \dim_H F(\alpha + \varepsilon) =: s$. $A_n$ is a limit set of an iterated function system generated by maps $f_{a_1,\ldots, a_n}$ with $(a_1,\cdots, a_n) \in \mathcal{A}_n$. This system satisfies the open set condition. Hence we have 

$$
\sum_{a_1\cdots a_n \in \mathcal{A}_n} \min_{x\in A_n} |f_{a_1,\cdots,a_n}'(x)|^{\dim_H A_n} \leq 1.
$$

By the bounded distortion property,

$$
\sum_{(a_1\cdots a_n) \in \mathcal{A}_n} |I_n(a_1,\ldots,a_n)|^s \leq K^s.
$$

Then by the exponential contraction property, 

$$
\sum_{a_1\cdots a_n \in \mathcal{A}_n} |I_n(a_1,\ldots,a_n)|^{s+\varepsilon} \leq K^s \lambda^{n\varepsilon}.
$$

Observe now that if $x\in W(\alpha)$ then for infinitely many $\{n_\ell\}_{\ell\in\mathbb{N}}$ we have $x\in I_{n_\ell}(a_1\ldots a_{n_\ell})$ for some $(a_1, \cdots, a_{n_\ell}) \in \mathcal{A}_{n_\ell}$. Hence, for every $N>0$

$$
W(\alpha) \subset \bigcup_{i>N} \bigcup_{(a_1, \cdots, a_i) \in \mathcal{A}_i} I_i(a_1,\cdots,a_i).
$$
This gives us a family of covers of $W(\alpha)$, using which one easily checks that

$$
\dim_H W(\alpha) \leq s+\varepsilon,
$$
from which we get $\dim_H W(\alpha) \leq \dim_H F(\alpha)$.

\end{proof}

\begin{remark}
According to the second referee of the paper, the inequality in the theorem can be sharpened into an equality. Here is a sketch of proof of the inverse inequality  $\dim_H W(\alpha) \geq \dim_H F(\alpha)$. Note that 
$$
F(\alpha) \subset \bigcup_{i>N} \bigcup_{(a_1, \cdots, a_i) \in \mathcal{A}_i} I_i(a_1,\cdots,a_i).
$$

Let $\eta_i$ satisfy $\sum_{(a_1,\ldots,\vartheta_i)\in\mathcal{A}_i}|I_n(a_1,\ldots,a_i)|^{\eta_i}=1$ for fixed $i$ and $\eta=\liminf_{i\rightarrow\infty} \eta_i$, then $\dim_H F(\alpha)\leq \eta$. Let $f(x)=a_1(x)$ for $x\in(0,1)$, 
$$
u(\alpha)=\sup\{h(\mu)/\lambda(\mu): \mu\in\mathcal{M}_G, \int f\rm{d}\mu\leq \alpha+\epsilon, \lambda(\mu)<\infty\}
$$
and
$$
v(\alpha)=\sup\{h(\mu)/\lambda(\mu): \mu\in\mathcal{M}_G, \int f\rm{d}\mu= \alpha+\epsilon, \lambda(\mu)<\infty\}.
$$
We can show that $u(\alpha)$ and $v(\alpha)$ depends continuously on $\alpha$, and they are the same if the measures $\mu$ are restricted on ergodic ones. By taking Bernoulli measures  ($G^i(x)$ invariant) with weights $|I_i(a_1,\ldots,a_i)|$ and then normalize to get invariant measures we have 

$$\eta\leq u(\alpha)=v(\alpha).$$  

According to \cite[Theorem 3.1]{IJ1}, let $\epsilon\rightarrow 0$, we have $v(\alpha)\leq \dim_H W(\alpha)$, so $\dim_H W(\alpha) \geq \dim_H F(\alpha)$.
\end{remark}

Now we can show our Theorem \ref{Theorem5} and \ref{Theorem1}  by virtue of Theorem \ref{Theorem8}.

Proof of Theorem \ref{Theorem5}:
\begin{proof}
This is an instant corollary of Theorem \ref{Theorem8} since for any irregular sequences $\{\varphi(n)\}_{n=1}^\infty$ with $\liminf_{n\rightarrow\infty}\frac{\varphi(n)}{n}=D<\infty$, we have $dim_H E_{\varphi(n)}\leq dim_H F(D)<1$. 
\end{proof}

Proof of Theorem \ref{Theorem1}:
\begin{proof}
This is a combinatorial result of Lemma \ref{lemma1}, Theorem \ref{Theorem3} and \ref{Theorem5}. 
\end{proof}

\section{Some further discussions}\label{section1}
Comparing our Proposition \ref{proposition1} with \cite[Theorem 1.1]{LR1}, we can see that for $1/2\leq \beta<1$ and a sequence $\{\varphi(n)\}_{n=1}^\infty$ with $\limsup_{n\rightarrow\infty}\frac{\log\log\varphi(n)}{\log n}=\beta$, the dimension of the set $dim_H E_{\varphi(n)}$ can still vary in a large scope (at least between $1/2$ and $1$). In this case, in order to get a full dimensional set (or say, an $s$-dimensional set, $0\leq s\leq 1$) $E_{\varphi(n)}$, not only order of $\varphi(n)$ matters, but also many other aspects. For example, in \cite[Theorem 1.2]{LR1}, differences of the two neighbouring terms $\varphi(n+1)-\varphi(n)$ (the term $\psi'(x)$ there in fact sets restrictions on $\varphi(n+1)-\varphi(n)$) play an importance role. Also, density of some ``special'' terms matters sometimes, as one can see from our examples in Proposition \ref{proposition1}, there $n^{1/N}$ describes density of the critical terms $\varphi(n_k)$. However, general conditions on these aspects for getting a full dimensional set are difficult for us to describe.

Now we would like to talk more about one of the techniques used in the proof of Theorem \ref{Theorem3}, that is, using bounds on the dimension of sets regarding individual quotients to bound dimension of sets regarding sums of the quotients. We can use the idea to do more in fact.

Since the dimensional theory of sets in continued fractions are developed, people have considered distributions of various terms, say, $a_n(x), s_n(x), T_n(x):=\max\{a_k(x): 1\leq k\leq n\}$ (see for example, \cite{Oka}, \cite{WX2}, \cite{LR1}, \cite{Ma}, \cite{FS}), $q_n(x)$ (see \cite{Bes} \cite{Goo}), etc., or hybrids of them (for example, see our Proposition \ref{proposition1}). Lots of results have been obtained since then. Although many results are closely related with each other, it seems that there are no systematic study of the links between these dimentional results on various terms. To the author, every dimentional result on one kind of the terms gives birth to results on other kinds of terms. For example, in 2008 Wu and Xu \cite[Theorem 1.1]{WX2} showed that for any $\alpha\geq 0$, $E(\alpha):=\{x\in(0,1): \lim_{n\rightarrow\infty} \frac{T_n(x)\log\log n}{n}=\alpha\}$, we have
\begin{center}
$dim_H E(\alpha)=1$.
\end{center}   
Now if we set 
\begin{center}
$V(\alpha)=\{x\in(0,1): a_n(x)\geq \frac{n}{2\log\log n}\alpha \ i. o.\ n\}$ 
\end{center} 
for any $\alpha\geq 0$, then we can see that
\begin{center}
$E(\alpha)\subset V(\alpha)$. 
\end{center}
So we get $dim_H V(\alpha)=1$ for any $\alpha\geq 0$. Of course we can also deal with $dim_H V(\alpha)$ by Theorem \ref{Theorem6} or a lemma of Good \cite[Lemma 6]{Goo}. There is another example on this idea. For $b, c>1$, let 
\begin{center}
$E(b,c)=\{x\in(0,1): a_n(x)\geq c^{b^n} \ i. o.\  n\}$.
\end{center} 
Then according to T. Luczak \cite[Theorem]{Luc} (see also \cite{FWLT}) or Theorem \ref{Theorem6},
\begin{center}
$dim_H E(b,c)=\frac{1}{b+1}$.
\end{center}
By this result we can show that
\begin{corollary}\label{corollary1}
For $b,c>1$ and $\varphi(n)=\sum_{k=1}^n c^{b^k}$, we have
\begin{center}
$dim_H E_{\varphi(n)}= \frac{1}{b+1}$. 
\end{center}
\end{corollary}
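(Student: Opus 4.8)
The plan is to sandwich $dim_H E_{\varphi(n)}$ between $\frac{1}{b+1}$ from above and $\frac{1}{b+1}$ from below, using the known value $dim_H E(b,c)=\frac{1}{b+1}$ (via \L uczak or WW's Theorem) for the upper bound and a direct construction for the lower bound.

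For the upper bound, I would argue exactly as in the proof of Theorem \ref{Theorem3}: if $x\in E_{\varphi(n)}$ then $s_n(x)\sim \varphi(n)=\sum_{k=1}^n c^{b^k}$, and since the last term $c^{b^n}$ dominates the whole sum (indeed $\sum_{k=1}^{n-1}c^{b^k}$ is negligible compared to $c^{b^n}$ because $b>1$), for $n$ large we have $s_n(x)>\tfrac12 c^{b^n}$, say. Hence there is some index $k\le n$ with $a_k(x)>\tfrac{1}{2n}c^{b^n}\ge \tfrac{1}{2n}c^{b^k}$. This shows $E_{\varphi(n)}\subset\{x: a_n(x)\ge \tfrac{1}{2n}c^{b^n}\ i.o.\ n\}$. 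A small computation (or WW's Theorem directly, since $\liminf \tfrac{1}{n}\log\log(\tfrac{1}{2n}c^{b^n})=\log b$) gives that this latter set has dimension $\frac{1}{b+1}$, so $dim_H E_{\varphi(n)}\le\frac{1}{b+1}$.

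For the lower bound, I would build an explicit subset of $E_{\varphi(n)}$ of dimension $\frac{1}{b+1}$. The natural candidate is to force $a_n(x)$ to be comparable to $c^{b^n}$ at a sparse set of indices and bounded in between; concretely, fix a sequence $s_n$ with $s_n\asymp c^{b^n}$ chosen so that $\sum_{k\le n}a_k(x)$ lands within $(1-\epsilon)\varphi(n)$ and $(1+\epsilon)\varphi(n)$ (this is arranged by a telescoping prescription like $a_n(x)=c^{b^n}-c^{b^{n-1}}$ plus a bounded correction, exactly the device used for $H_M$ and in Proposition \ref{proposition1}), and let the remaining partial quotients range over $\{1,\dots,M\}$ or be fixed to $1$. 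Then apply FLWW's Lemma (or LR's Lemma) to compute the dimension of the resulting Cantor-type set: with $s_n\asymp c^{b^n}$ one gets $\limsup_n \frac{\log s_{n+1}}{\log(s_1\cdots s_n)}=\limsup_n\frac{b^{n+1}\log c}{(b+\cdots+b^n)\log c}=\frac{b^{n+1}}{(b^{n+1}-b)/(b-1)}\to b-1$, so the dimension formula $(2+\limsup)^{-1}$ yields $\frac{1}{b+1}$. Since this set is contained in $E_{\varphi(n)}$, we conclude $dim_H E_{\varphi(n)}\ge\frac{1}{b+1}$.

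The main obstacle is the lower bound bookkeeping: one must check that the chosen prescription for the critical partial quotients actually keeps $\frac{s_n(x)}{\varphi(n)}\to 1$ (not merely bounded), which forces a careful telescoping choice and an estimate that the bounded ``filler'' quotients contribute only $O(n)=o(\varphi(n))$; and one must verify the hypotheses of FLWW's/LR's Lemma (the sequence tending to infinity, the ratio condition $\frac{\log(t_n-1)}{\log s_n}\to 0$) for the doubly-exponential $s_n$. Both are routine given the machinery already assembled in the paper, so I expect this corollary to follow with essentially the same two-sided argument as Theorem \ref{Theorem3} combined with the constructions of Section \ref{section2}.
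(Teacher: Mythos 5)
Your two-sided strategy is sound and your lower bound ends up being exactly the paper's: the paper takes the subset $\{x: c^{b^n}<a_n(x)<(1+\tfrac1n)c^{b^n} \mbox{ for all } n\}$ of $E_{\varphi(n)}$ and applies LR's Lemma, which is precisely the computation you carry out ($\limsup_n \frac{\log s_{n+1}}{\log s_1\cdots s_n}=b-1$, hence dimension $\frac{1}{b+1}$). One caveat on your description: the ``sparse critical indices with bounded fillers'' device from $H_M$/Proposition \ref{proposition1} is not the right picture here and would in fact take you outside the scope of FLWW's/LR's Lemma, which require $s_n\to\infty$ at \emph{every} index; since $\varphi(n)-\varphi(n-1)=c^{b^n}$, the correct (and intended) prescription is simply $a_n(x)\asymp c^{b^n}$ for all $n$, with the shrinking window $t_n=1+\tfrac1n$ (so LR's Lemma rather than FLWW with a fixed factor $N$) to keep $s_n(x)/\varphi(n)\to1$ — your final computation already reflects this. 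Your upper bound, however, is genuinely different from the paper's. The paper decomposes $E_{\varphi(n)}\subset E(b,c-\epsilon)\cup\bigcup_{N}F_N(b,c-\epsilon)$, bounding the sets $F_N$ by FLWW's Lemma and $E(b,c-\epsilon)$ by \L{}uczak/WW's Theorem, whereas you run the pigeonhole argument of Prof.\ Wang's proof of Theorem \ref{Theorem3} directly at the doubly exponential scale: $s_n(x)>\tfrac12 c^{b^n}$ forces some $a_k(x)>\tfrac{1}{2n}c^{b^n}$ with $k\le n$, giving $E_{\varphi(n)}\subset\{x: a_n(x)\ge\tfrac{1}{2n}c^{b^n}\ i.o.\}$, and WW's Theorem with $B=\infty$, $\liminf_n\tfrac1n\log\log(\tfrac{1}{2n}c^{b^n})=\log b$ yields $\frac{1}{1+b}$. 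This is more direct and avoids the auxiliary parameter $\epsilon$ and the family $F_N$; the only point you should spell out (as in the quoted remark) is why the witnessing indices $k=k(n)$ are unbounded, so that the ``infinitely often'' event really occurs at infinitely many distinct indices — a fixed $k$ cannot serve for arbitrarily large $n$ since $a_k(x)$ is finite, and for $k\le n$ both large one has $\tfrac{1}{2n}c^{b^n}\ge\tfrac{1}{2k}c^{b^k}$ by monotonicity of $c^{b^m}/m$. With that detail added, your argument is complete.
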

\begin{proof}
First, by Lemma \ref{lemma3} or \ref{lemma4}, we have
\begin{center}
$dim_H \{x\in(0,1): \frac{1}{2}c^{b^n}<a_n(x)<c^{b^n}, n\in\mathbb{N}\}=\frac{1}{b+1}$
\end{center}
for any $b,c>1$. Choose $\epsilon>0$ small enough such that $c-\epsilon>1$. For any $N\in\mathbb{N}$, let 
\begin{center}
$F_N(b,c-\epsilon)=\{x\in E_{\varphi(n)}: \frac{1}{2}{(c-\epsilon)}^{b^n}<a_n(x)<{(c-\epsilon)}^{b^n} \mbox{ for all } n\geq N\}$.
\end{center}
Then $dim_H F_N(b,c-\epsilon)\leq \frac{1}{b+1}$ for any $N\in\mathbb{N}$. We can show that
\begin{center}
$E_{\varphi(n)}\subset E(b, c-\epsilon)\cup\big(\cup_{N=1}^\infty F_N(b,c-\epsilon)\big)$.
\end{center}
Since $dim_H E(b, c-\epsilon)=\frac{1}{b+1}, dim_H F_N(b,c-\epsilon)\leq \frac{1}{b+1}$ for any $N\in\mathbb{N}$, then $dim_H E_{\varphi(n)}\leq \frac{1}{b+1}$. Moreover, as $\{x\in(0,1): c^{b^n}<a_n(x)<(1+\frac{1}{n})c^{b^n} \mbox{ for all } n\in\mathbb{N}\}\subset E_{\varphi(n)}$, by Lemma \ref{lemma4}, 
\begin{center}
$dim_H E_{\varphi(n)}\geq dim_H \{x\in(0,1): c^{b^n}<a_n(x)<(1+\frac{1}{n})c^{b^n} \mbox{ for all } n\in\mathbb{N}\}= \frac{1}{b+1}$,
\end{center}
so $dim_H E_{\varphi(n)}= \frac{1}{b+1}$.

\end{proof}



\begin{thebibliography}{88}
\bibitem[Bes]{Bes} A. Besicovitch, On rational approximation to real numbers, J. London Math. Soc. 9(1934), 126-131.

\bibitem[CV]{CV} E. Cesaratto and B. Vall\'ee, Hausdorff dimension of real numbers with bounded digit averages, Acta Arith. 125 115-162, 2006.

\bibitem[DV]{DV} H. G. Diamond, J. D. Vaaler, Estimates for partial sums of continued
fraction partial quotients, Pacific J. Math. 122, 73--82, 1986.

\bibitem[FLWW1]{FLWW1} A. Fan, L. Liao, B. Wang and J. Wu, On Kintchine exponents and Lyapunov exponents of continued fractions, Ergodic Theory and Dynamical Systems 29 (2009), 73-109.

\bibitem[FLWW2]{FLWW2} A. Fan, L. Liao, B. Wang and J. Wu, On the fast Khintchine spectrum in continued fractions, Monatshefte f\"ur Mathematik, September 2013, Volume 171, Issue 3, pp 329-340.


\bibitem[FS]{FS} L. Fang and K. Song, A remark on the extreme value theory for continued fractions, https://arxiv.org/abs/1608.04326, 2016

\bibitem[FWLT]{FWLT} D. Feng, J. Wu, J. Liang and S. Tseng, Appendix to the paper by T. \L{}uczak-a simple proof of the lower bound, Mathematika 44(1997) 54-55.

\bibitem[Goo]{Goo}  I. J. Good, The fractional dimensional theory of continued fractions, Mathematical Proceedings of the Cambridge Philosophical Society 37 (1941), 199-228.

\bibitem[IJ1]{IJ1} G. Iommi and T. Jordan, Multifractal analysis of Birkhoff averages for countable Markov maps, Ergodic Theory and Dynamical Systems 35 (2015), 2559-2586.

\bibitem[IJ2]{IJ2} G. Iommi and T. Jordan, Multifractal analysis of quotients of Birkhoff sums for countable Markov maps. Int. Math. Res. Not. IMRN 2 (2015), 460-498.

\bibitem[JR]{JR} T. Jordan and M. Rams, Increasing digit subsystems of infinite iterated function systems, Proc. Amer. Math. Soc. 140 (2012), no. 4, 1267-1279.

\bibitem[Khi]{Khi} A. Ya. Khinchin, Metrische Kettenbruchprobleme Compos. Math. \textbf{1} 361-382, 1935. 


\bibitem[Luc]{Luc} T. \L{}uczak, On the fractional dimension of sets of continued fractions, Mathematika 44(1997) 50-53.


\bibitem[LR1]{LR1} L. Liao and M. Rams, Subexponentially increasing sums of partial
quotients in continued fraction expansions, Mathematical Proceedings of the Cambridge Philosophical Society 160, no. 03, 401-412, 2016.

\bibitem[LR2]{LR2} L. Liao and M. Rams,Upper and lower fast Khintchine spectra in continued fractions, Monatshefte f\"ur Mathematik 180 (1) (2016) 65-81. 

\bibitem[Ma]{Ma} L. Ma, A remark on Liao and Rams' result on distribution of the leading partial quotient with growing speed $e^{n^{1/2}}$ in continued fractions, https://arxiv.org/abs/1606.03344, 2016

\bibitem[Man]{Man} B. Mandelbrot, The fractal geometry of nature, Macmillan, 1983.

\bibitem[MU1]{MU1} R. Mauldin and M. Urba\'nski, Dimensions and measures in infinite iterated function systerns, Proceedings London Math. Soc., (3) 73(1996), 105-154. MR 97c:28020

\bibitem[MU2]{MU2} R. Mauldin and M. Urba\'nski, Conformal iterated function systems with applications to the geometry of continued fractions, Trans. Am. Math. Soc. 351 (1999) 4995-5025.

\bibitem[Oka]{Oka} T. Okano, Explicit continued fractions with expected partial quotient growth, Proc. Amer. Math. Soc. 130 (3) (2002), 1603-1605.

\bibitem[Phi]{Phi} W. Philipp, Limit theorems for sums of partial quotients of continued fractions, Monatshefte f\"ur Mathematik, Volume 105, Issue 3, pp 195-206, September 1988.

\bibitem[Wal]{Wal} P. Walters, An introduction to ergodic theory, Springer, Berlin, 1982.

\bibitem[Wu]{Wu} J. Wu, A remark on the growth of the denominators of convergents, Monatshefte f\"ur Mathematik 147(2006) 259-264.

\bibitem[WW]{WW} B. Wang and J. Wu, Hausdorff dimension of certain sets arising in continued fraction expansions, Advances in Mathematics, Volume 218, Issue 5, 1 August 2008, Pages 1319-1339.

\bibitem[WX1]{WX1} J. Wu and J. Xu, On the distribution for sums of partial quotients in continued fraction expansions, Nonlinearity 24 (2011), no. 4, 1177-1187.

\bibitem[WX2]{WX2} J. Wu and J. Xu, The distribution of the largest digit in continued fraction expansions, Mathematical Proceedings of the Cambridge Philosophical Society 146(2009) 207-212.

\bibitem[Xu]{Xu} J. Xu, On sums of partial quotients in continued fraction expansions, Nonlinearity 21 (2008), no. 9, 2113-2120.




\end{thebibliography}
\end{document}